\newtheorem*{THM}{Main Result}
\newtheorem*{ThM}{Theorem B}
\newtheorem{lemma}{Lemma}
\newtheorem{cor}{Corollary}
\newtheorem{claim}{Subclaim}
\newtheorem*{defi}{Definition}
\newcommand\name[1]{\dot{#1}}
\def\forces{\Vdash}
\def\s{\subseteq}
\def\br{\blacktriangleright}
\def\sd{\framebox[3.6mm][l]{$\diamondsuit$}\hspace{0.5mm}{}}
\DeclareMathOperator{\dom}{dom}
\DeclareMathOperator{\otp}{otp}
\DeclareMathOperator{\acc}{acc}
\DeclareMathOperator{\chr}{Chr}
\DeclareMathOperator{\rng}{rng}
\DeclareMathOperator{\zfc}{\hbox{\textsf{ZFC}}}
\DeclareMathOperator{\gch}{\hbox{\textsf{GCH}}}
\begin{document}
\author{Assaf Rinot}
\address{Department of Mathematics, Bar-Ilan University, Ramat-Gan 52900, Israel.}
\urladdr{http://www.assafrinot.com}
\title{Hedetniemi's conjecture for uncountable graphs}
\begin{abstract}It is proved that in G\"odel's constructible universe,
for every successor cardinal $\kappa$, there exist graphs $\mathcal G$ and $\mathcal H$
of size and chromatic number $\kappa$, for which the tensor product graph $\mathcal G\times\mathcal H$ is countably chromatic.
\end{abstract}

\maketitle

\section{Background}A graph $\mathcal G$ is a pair $(G,E)$, where $E\s[G]^2=\{ \{x,y\} \mid x,y\in G\ \&\  x\neq y\}$.
The chromatic number of $\mathcal G$, denoted $\chr(\mathcal G)$, is the least cardinal $\kappa$ such that $G$ is the union of $\kappa$ many $E$-independent sets.
Equivalently, $\chr(\mathcal G)$ is the least cardinal $\kappa$, for which there exists an $E$-chromatic $\kappa$-coloring of $G$,
that is, a coloring $\chi:G\rightarrow\kappa$ that satisfies $\chi(x)\neq\chi(y)$
whenever $xEy$.

Given graphs $\mathcal G_0=(G_0,E_0)$ and $\mathcal G_1=(G_1,E_1)$, the tensor product graph $\mathcal G_0\times\mathcal G_0$ is defined as $(G_0\times G_1,E_0*E_1)$, where:
\begin{itemize}
\item $G_0\times G_1:=\{ (g_0,g_1)\mid g\in G_0, h\in G_1\}$;
\item $E_0*E_1:=\{ \{(g_0,g_1), (g_0',g_1')\}\mid (g_0,g_0')\in E_0 \text{ and }(g_1,g_1')\in E_1\}$.
\end{itemize}

Clearly, $\chr(\mathcal G_0\times\mathcal G_1)\le\min\{\chr(\mathcal G_0),\chr(\mathcal G_1)\}$.
Hedetniemi's conjecture \cite{MR2615860} states that
$\chr(\mathcal G_0\times\mathcal G_1)=\min\{\chr(\mathcal G_0),\chr(\mathcal G_1)\}$.

In a paper from 1985, Hajnal \cite{MR815579} proved that the above equality may fail in the case that $\mathcal G_0,\mathcal G_0$ are infinite graphs.
Specifically, he proved that for every infinite cardinal $\lambda$,
there exist graphs $\mathcal G_0,\mathcal G_1$ of size and chromatic number $\lambda^+$,
such that $\chr(\mathcal G_0\times \mathcal G_1)=\lambda$.\footnote{Todorcevic pointed out
that some instances of the failure of the infinite Hedetnimei conjecture were implictly available in the early 1970's \cite{MR0419229}.
See also \cite{MR657114} and \cite{MR1667145}.} Subsequently, Soukup \cite{MR937544} established the consistency
of $\zfc+\gch$ together with the existence of graphs $\mathcal G_0,\mathcal G_1$ of size and chromatic number $\aleph_2$
such that $\chr(\mathcal G_0\times \mathcal G_1)=\aleph_0$.
So, in Hajnal's example, we have
$\chr(\mathcal G_0\times \mathcal G_1)=\log(\min\{\chr(\mathcal G_0),\chr(\mathcal G_1)\})$,
and in Soukup's example, we have
$\chr(\mathcal G_0\times \mathcal G_1)=\log\log(\min\{\chr(\mathcal G_0),\chr(\mathcal G_1)\})$.
This raises the general question of how badly may Hedetniemi's conjecture fail in the infinite case. Specifically:
\begin{enumerate}
\item (Hajnal, \cite{pims2004}; Soukup, \cite{MR2432534}) Is it consistent with $\zfc+\gch$ that there are graphs $\mathcal G_0,\mathcal G_1$
with $\chr(\mathcal G_0)=\chr(\mathcal G_1)=\aleph_3$ and $\chr(\mathcal G_0\times\mathcal G_1)=\aleph_0$?
\item (Hajnal, \cite{MR815579}) Is it consistent with $\zfc+\gch$ that there are graphs $\mathcal G_0,\mathcal G_1$
with $\chr(\mathcal G_0)=\chr(\mathcal G_1)\ge\aleph_\omega$ and $\chr(\mathcal G_0\times\mathcal G_1)<\aleph_\omega$?
\end{enumerate}

Hajnal noticed the following obstruction. He proved that if $\mathcal G_0,\mathcal G_1$
are pair of graphs of size and chromatic number $\kappa$ whose tensor product is countably chromatic,
then for every $i<2$, $\mathcal G_i$ is $(\aleph_0,\kappa)$-chromatic,
that is, $\chr(\mathcal G_i)=\kappa$, but every subgraph of $\mathcal G_i$ of size $<\kappa$ has chromatic number $\le\aleph_0$.

A model of $\zfc+\gch$ in which there exists an $(\aleph_0,\aleph_2)$-chromatic graph of size $\aleph_2$ was obtained by Baumgartner \cite{MR736618}
via a very complicated notion of forcing. Then, Soukup's model \cite{MR937544} of $\zfc+\gch$ with graphs $\mathcal G_0,\mathcal G_1$ of size and chromatic number $\aleph_2$
such that $\chr(\mathcal G_0\times \mathcal G_1)=\aleph_0$ is a further sophistication of Baumgartner's forcing.
As Baumgartner's forcing does not seem to generalize to yield a model of an  $(\aleph_0,\aleph_3)$-chromatic graph,
an answer to the above questions would require an alternative construction.
Alternative constructions were soon offered by Komj{\'a}th \cite{MR941243}, Soukup \cite{MR1066404} and Shelah  \cite{sh:347},
with the latter having the feature of $\gch$ holding.
However -- as Hajnal mentioned in his survey paper \cite{pims2004} --
there was no success in tailoring these constructions.

Recently, the author \cite{rinot12} found yet another construction that builds on the concept of \emph{Ostaszewski square} that was introduced in \cite{rinot11}.
He denoted these graphs by $G(\overrightarrow C)$ and isolated the features of $G$ and $\overrightarrow C$
that make $G(\overrightarrow C)$ into $(\aleph_0,\lambda^+)$-chromatic graphs.
Even more recently \cite{rinot15}, he proved that the chromatic number of these graphs may be turned countable
via a $(\le\lambda)$-distributive notion of forcing. In this paper,
these new findings are combined together with the basic idea of Hajnal's 1985 construction
to obtain the ultimate counterexample to the infinite Hedetniemi conjecture.

\begin{THM} If $\lambda$ is an uncountable cardinal, and $\sd_\lambda$ holds,
then there exist graphs $\mathcal G_0,\mathcal G_1$ of size $\lambda^+$ such that:
\begin{itemize}
\item $\chr(\mathcal G_0)=\chr(\mathcal G_1)=\lambda^+$;
\item $\chr(\mathcal G_0\times\mathcal G_1)=\aleph_0$.
\end{itemize}
\end{THM}
\begin{cor}In G\"odel's constructible universe,
for every successor cardinal $\kappa$, there exist graphs $\mathcal G_0$ and $\mathcal G_1$
of size and chromatic number $\kappa$, for which the tensor product graph $\mathcal G_0\times\mathcal G_1$ is countably chromatic.
\end{cor}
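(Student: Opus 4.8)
The plan is to read off the Corollary from the Main Result together with the two background facts already recorded: Hajnal's $\zfc$ theorem, and the standard behavior of combinatorial principles in G\"odel's $L$. So I would work throughout in $L$, where $\gch$ and the relevant square/diamond principles are available. Let $\kappa$ be an arbitrary successor cardinal and write $\kappa=\lambda^+$ for a cardinal $\lambda$. The whole argument is a case split according to whether $\lambda$ is countable, together with a verification of the hypothesis $\sd_\lambda$.

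First I would dispose of the boundary case $\lambda=\aleph_0$, i.e.\ $\kappa=\aleph_1$. Here the Main Result is simply unavailable, since its hypothesis demands that $\lambda$ be uncountable, so one must not try to feed it $\lambda=\aleph_0$. Instead I would invoke Hajnal's $\zfc$ theorem quoted in the Background: applied to the infinite cardinal $\lambda=\aleph_0$ it yields graphs $\mathcal G_0,\mathcal G_1$ of size and chromatic number $\lambda^+=\aleph_1$ with $\chr(\mathcal G_0\times\mathcal G_1)=\lambda=\aleph_0$. Since $\aleph_0$ is countable, this is precisely the assertion of the Corollary for $\kappa=\aleph_1$, and it holds outright in $\zfc$, hence a fortiori in $L$.

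For the remaining case, assume $\lambda\ge\aleph_1$, so that $\lambda$ is an uncountable cardinal. Now the task reduces to checking the hypothesis of the Main Result, namely that $\sd_\lambda$ holds. This is a standard feature of the constructible universe: by Jensen's fine-structural analysis of $L$, the diamond-type principle $\sd_\lambda$ holds at every uncountable cardinal $\lambda$ — regular or singular — whenever $V=L$. Granting this, the Main Result applied to $\lambda$ delivers graphs $\mathcal G_0,\mathcal G_1$ of size $\lambda^+=\kappa$ with $\chr(\mathcal G_0)=\chr(\mathcal G_1)=\lambda^+=\kappa$ and $\chr(\mathcal G_0\times\mathcal G_1)=\aleph_0$, which is exactly the desired conclusion.

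The only genuine subtlety I anticipate is this boundary case $\kappa=\aleph_1$: because the Main Result is stated only for uncountable $\lambda$, the construction there does not reach $\kappa=\aleph_1$, and one must instead fall back on Hajnal's classical example. Everything else is bookkeeping: recalling that $\sd_\lambda$ holds in $L$ for every uncountable $\lambda$, and reading off the numerology $\kappa=\lambda^+$. No new graph-theoretic or forcing input is required beyond what the excerpt already supplies.
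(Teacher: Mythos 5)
Your proposal is correct and follows exactly the paper's own argument: the case $\kappa=\aleph_1$ is handled by Hajnal's \textsf{ZFC} example (which gives product chromatic number $\aleph_0$ when applied to $\lambda=\aleph_0$), and for $\kappa=\lambda^+$ with $\lambda$ uncountable one invokes the Main Result together with the fact that $\sd_\lambda$ holds in $L$ for every uncountable $\lambda$. Nothing is missing; the case split and the verification of the hypotheses are precisely as in the paper.
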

\begin{cor} In G\"odel's constructible universe, for every positive integer $n$,
there exist infinite graphs $\mathcal G_0$ and $\mathcal G_1$ such that
$$\chr(\mathcal G_0\times\mathcal G_1)=\underbrace{\log\cdots\log}_{n\text{ times}}(\min\{\chr(\mathcal G_0),\chr(\mathcal G_1)\}).$$
\end{cor}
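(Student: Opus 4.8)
The plan is to deduce this directly from the first corollary together with the behavior of the cardinal function $\log$ under the $\gch$, where I recall that $\log(\kappa)$ denotes the least cardinal $\mu$ for which $2^\mu\ge\kappa$. The whole content of the statement lies in the iterated-logarithm bookkeeping, since the graphs themselves will simply be supplied by Corollary~1.

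First I would fix a positive integer $n$ and work inside $L$, where the $\gch$ holds, so that $2^{\aleph_k}=\aleph_{k+1}$ for every $k<\omega$. From this it follows at once that $\log(\aleph_{k+1})=\aleph_k$ for every $k<\omega$: the equality $2^{\aleph_k}=\aleph_{k+1}$ witnesses $\log(\aleph_{k+1})\le\aleph_k$, while for $k\ge1$ the strict inequality $2^{\aleph_{k-1}}=\aleph_k<\aleph_{k+1}$ rules out any smaller value, and for $k=0$ one uses that every $\mu<\aleph_0$ yields a finite $2^\mu<\aleph_1$. A trivial induction on $n$ then telescopes this to $\underbrace{\log\cdots\log}_{n\text{ times}}(\aleph_n)=\aleph_0$.

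Next I would invoke Corollary~1 at the successor cardinal $\kappa:=\aleph_n$ to obtain graphs $\mathcal G_0,\mathcal G_1$ of size and chromatic number $\aleph_n$ whose tensor product is countably chromatic, that is, $\chr(\mathcal G_0\times\mathcal G_1)=\aleph_0$. Since $\min\{\chr(\mathcal G_0),\chr(\mathcal G_1)\}=\aleph_n$, combining this with the computation of the previous paragraph gives
$$\chr(\mathcal G_0\times\mathcal G_1)=\aleph_0=\underbrace{\log\cdots\log}_{n\text{ times}}(\aleph_n)=\underbrace{\log\cdots\log}_{n\text{ times}}(\min\{\chr(\mathcal G_0),\chr(\mathcal G_1)\}),$$
as desired. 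There is no genuine obstacle here: the difficulty resides entirely in the Main Result and its first corollary, and the only points demanding any care are recalling the definition of $\log$ and verifying that the iterated logarithm telescopes correctly under the $\gch$. I would also note the mild boundary case $n=1$, where $\kappa=\aleph_1$ is the single successor cardinal not produced by the Main Result itself but already furnished by Hajnal's classical construction, so that Corollary~1 is indeed available across all $n\ge1$.
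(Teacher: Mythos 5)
Your proposal is correct and follows essentially the same route as the paper: invoke Corollary 1 at $\kappa=\aleph_n$ to obtain the graphs, then use $\gch$ in $L$ to compute that the $n$-fold iterated logarithm of $\aleph_n$ equals $\aleph_0$. The extra details you supply (the telescoping computation and the $n=1$ case via Hajnal's example) are exactly the points the paper leaves implicit.
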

In Section \ref{sec3} below, we also settle the generalized problem concerning the tensor product of $n+1$ many graphs ($0<n<\omega$).

\section{Proof of the main result}
Suppose that $\lambda$ is an uncountable cardinal, and $\sd_\lambda$ holds.
By $\sd_\lambda$ and a standard partitioning argument \cite{MR0491194},\cite{rinot11}, we may then fix a sequence $\langle (D_\alpha,X_\alpha)\mid \alpha<\lambda^+\rangle$,
along with a function $h:\lambda^+\rightarrow\lambda^+$ such that all of the following hold:
\begin{enumerate}
\item  for every limit $\alpha<\lambda^+$, $D_\alpha$ is a club in $\alpha$ of order-type $\le\lambda$;
\item if $\beta\in\acc(D_\alpha)$, then $D_\beta=D_\alpha\cap\beta$, $X_\beta=X_\alpha\cap\beta$ and $h(\beta)=h(\alpha)$;\footnote{Here, $\acc(A):=\{ \alpha\in\sup(A)\mid \sup(A\cap\alpha)=\alpha>0\}.$}
\item for every subset  $X\s\lambda^+$, a club $E\s\lambda^+$ and $\varsigma<\lambda^+$,
there exists a limit $\alpha<\lambda^+$ with $\otp(D_\alpha)=\lambda$,
such that $h(\alpha)=\varsigma$, $X_\alpha=X\cap\alpha$ and $\acc(D_\alpha)\s E$.
\end{enumerate}

Clearly, $\langle X_\alpha\mid \alpha\in G_i\rangle$ is a $\diamondsuit(G_i)$-sequence,
where  $G_i:=\{\alpha<\lambda^+\mid h(\alpha)=i\ \&\ \otp(D_\alpha)=\lambda\}$.
Since, $G_0$ and $G_1$ are nonreflecting and pairwise-disjoint stationary sets,
it is then natural to use $G_0(\overrightarrow D)$ and $G_1(\overrightarrow D)$ as
the building blocks of our graphs.\footnote{The graph $G(\overrightarrow D)$ was introduced in \cite{rinot12},
and it was proven there that if $\overrightarrow D$ is a $\square_\lambda$-sequence, and $G$ is a nonreflecting subset of $\lambda^+$,
then any subgraph of $G(\overrightarrow D)$ of size $<\lambda^+$ is countably chromatic.}
Loosely speaking, one of the features that we would need is the ability to kill (via forcing) the guessing feature of $\langle X_\alpha\mid \alpha\in G_0\rangle$,
while preserving the features  of  $\langle X_\alpha\mid \alpha\in G_1\rangle$, and vice versa.
For this, we shall borrow an idea from the proof of \cite[Theorem 2.4]{sh:64}, where a model of $\diamondsuit(\omega_1\setminus S)+\neg\diamondsuit(S)$ was obtained for the first time.\footnote{
The proof is not given in \cite{sh:64}, rather, it is given as the proof of Theorem 2.4 from \cite{sh:98}.
Personally, I learned that proof from Juris Stepr\={a}ns.}

Here goes.
Fix a large enough regular cardinal $\theta\gg \lambda$, together with a well-ordering $\unlhd_\theta$ of $\mathcal H_\theta$.
Fix a bijection $\psi:(({}^{<\lambda^+}\lambda)\times({}^{<\lambda^+}({}^{\lambda+1}2)))\leftrightarrow \lambda^+$.

For every limit $\alpha<\lambda^+$ with $\sup(\acc(D_\alpha))<\alpha$, let $d_\alpha$ be a cofinal subset of $\alpha$
of order-type $\omega$, consisting of successor ordinals. For $\alpha<\lambda^+$ with $\sup(\acc(D_\alpha))=\alpha$, let $d_\alpha:=\acc(D_\alpha)$.

Fix a limit ordinal $\alpha<\lambda^+$. We would like to determine a function $g_\alpha\in{}^{\le\alpha}({}^{\lambda+1}2)$.
For this, let $\{ \alpha_i\mid i<\otp(d_\alpha)\}$ be the increasing enumeration of $d_\alpha$.
Recursively define a sequence $\langle (p^\alpha_i,f^\alpha_i)\mid i<\otp(d_\alpha)\rangle$, as follows:

$\br$ Let $f_0:=\emptyset$ and $p_0:=\emptyset$.

$\br$ If $i<\otp(d_\alpha)$ and $\langle (p^\alpha_j,f^\alpha_j)\mid j\le i\rangle$ is defined, let
$$\mathcal P^\alpha_{i}:=\{ p\in {}^{<\alpha_{i+1}}\lambda\mid \psi(p,f)\in X_\alpha\cap{\alpha_{i+1}}, p\supseteq p_i, f\supseteq f_i, \dom(p)>\dom(f)=\alpha_i \},$$
$$\mathcal F^\alpha_{i}:=\left\{ f\in  {}^{\alpha_{i}}({}^{\lambda+1}2) \mid \psi(p,f)\in X_\alpha\cap{\alpha_{i+1}}, p=\min_{\unlhd_\theta}\mathcal P_i, f\supseteq f_i \right\},$$
and put
$$(p^\alpha_{i+1},f^\alpha_{i+1}):=\begin{cases}(\min_{\unlhd_\theta}\mathcal P^\alpha_{i},\min_{\unlhd_\theta}\mathcal F^\alpha_{i})&\mathcal P^\alpha_{i}\neq\emptyset\\
(\emptyset,\emptyset),&\text{otherwise}\end{cases}.$$

$\br$ If $i<\otp(d_\alpha)$ is a limit ordinal, and $\langle (p^\alpha_j,f^\alpha_j)\mid j< i\rangle$ is defined, let
$p^\alpha_i:=\bigcup_{j<i}p^\alpha_j$ and $f^\alpha_i:=\bigcup_{j<i}f^\alpha_j$.

This completes the construction of $\langle (p^\alpha_i,f^\alpha_i)\mid i<\otp(d_\alpha)\rangle$.
Put
\begin{itemize}
\item $g_\alpha:=\bigcup\{ f^\alpha_i\mid i<\otp(d_\alpha), \forall j< i(\mathcal P^\alpha_j\neq\emptyset)\}$;
\item $A_\alpha^i:=\{ \beta<\dom(g_\alpha)\mid g_\alpha(\beta)(i)=1, h(\beta)=h(\alpha)\}$ for all $i<\lambda$;
\item $K_\alpha:=\{ \beta<\dom(g_\alpha)\mid g_\alpha(\beta)(\lambda)=1\}$.
\end{itemize}
For every $i<\otp(d_\alpha)$, put $\alpha_i':=\min((K_\alpha\cup\{\alpha_{i+1}\})\setminus\alpha_i+1)$,
and $\alpha_i'':=\min((A_\alpha^i\cup\{\alpha_{i+1}\})\setminus\alpha_i')$. Finally, put:
$$C_\alpha:=\begin{cases}d_\alpha\setminus\dom(g_\alpha),&\dom(g_\alpha)<\alpha\\
\acc(d_\alpha)\cup\{ \alpha_i''\mid i<\otp(d_\alpha), \alpha_i<\alpha_i''<\alpha_{i+1}\},&\text{otherwise}\end{cases}.$$

\begin{lemma}\label{l21} For every limit $\alpha<\lambda^+$:
\begin{enumerate}
\item  $C_\alpha$ is a club in $\alpha$ of order-type $\le\lambda$;
\item if $\beta\in\acc(C_\alpha)$, then $C_\beta=C_\alpha\cap\beta$;
\item if $\otp(C_\alpha)=\lambda$, then $h(\beta)=h(\alpha)$ for all $\beta\in C_\alpha$.
\end{enumerate}
\end{lemma}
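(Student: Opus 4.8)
The plan is to fix a limit ordinal $\alpha<\lambda^+$ and split the analysis according to which clause of the definition of $C_\alpha$ applies, that is, according to whether $\dom(g_\alpha)<\alpha$ (the recursion \emph{breaks}, so that $\mathcal P^\alpha_i=\emptyset$ for some $i$) or $\dom(g_\alpha)=\alpha$ (the recursion never breaks, since $d_\alpha$ is cofinal in $\alpha$ in both of its defining cases). Two structural observations drive everything and I would record them first: $d_\alpha$ is cofinal in $\alpha$, and each adjoined point $\alpha_i''$ is confined, one per gap, to its interval $(\alpha_i,\alpha_{i+1})$, so that every limit point of $C_\alpha$ below $\alpha$ is automatically a limit point of $d_\alpha$.

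For (1), when $\dom(g_\alpha)<\alpha$ the set $C_\alpha=d_\alpha\setminus\dom(g_\alpha)$ is a final segment of the club $d_\alpha$, hence a club in $\alpha$ of order-type $\le\otp(d_\alpha)\le\lambda$. When $\dom(g_\alpha)=\alpha$ the order-type bound is immediate ($\acc(d_\alpha)$ has order-type $\le\lambda$ and at most one point is adjoined per gap), and closedness follows from the one-per-gap observation, which forces every limit point of $C_\alpha$ into $\acc(d_\alpha)\subseteq C_\alpha$. The one point requiring care is unboundedness: it is clear once $\acc(d_\alpha)$ is already cofinal in $\alpha$ (e.g.\ when $\otp(d_\alpha)$ is a limit of limit ordinals), and otherwise one must exploit that the recursion did not break in order to produce cofinally many $i$ with $\alpha_i<\alpha_i''<\alpha_{i+1}$.

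The heart of the lemma is (2), and this is where I expect the main difficulty. Given $\beta\in\acc(C_\alpha)$, the first step is to locate $\beta$ in $\acc(D_\alpha)$: by the one-per-gap observation $\acc(C_\alpha)\subseteq\acc(d_\alpha)$, which is empty (so (2) is vacuous) when $d_\alpha$ is the $\omega$-sequence, and otherwise $d_\alpha=\acc(D_\alpha)$ gives $\acc(d_\alpha)=\acc(\acc(D_\alpha))\subseteq\acc(D_\alpha)$. Property (2) of $\overrightarrow D$ then supplies $D_\beta=D_\alpha\cap\beta$, $X_\beta=X_\alpha\cap\beta$ and $h(\beta)=h(\alpha)$, whence $d_\beta=d_\alpha\cap\beta$. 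The crux is an absoluteness argument: the recursion defining $\langle(p^\beta_i,f^\beta_i)\rangle$ reads off only $X_\beta$, the enumeration of $d_\beta$, and the fixed well-order $\unlhd_\theta$, and since all of this data agrees below $\beta$ with the data for $\alpha$, a transfinite induction on $i<\otp(d_\beta)$ yields $(p^\beta_i,f^\beta_i)=(p^\alpha_i,f^\alpha_i)$ and $\mathcal P^\beta_i=\mathcal P^\alpha_i$ throughout. In particular the break index transfers exactly, so $\beta$ falls under the matching clause, $g_\beta=g_\alpha\restriction\beta$, and feeding $h(\beta)=h(\alpha)$ back in gives $K_\beta=K_\alpha\cap\beta$ and $A^i_\beta=A^i_\alpha\cap\beta$; hence $\beta_i'=\alpha_i'$ and $\beta_i''=\alpha_i''$ for the relevant $i$, and finally $C_\beta=C_\alpha\cap\beta$. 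The delicate bookkeeping is to confirm that the break index really does transfer and to track the index shift in the domains of the $f^\alpha_i$.

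For (3), I would observe that the hypothesis $\otp(C_\alpha)=\lambda$ rules out the cases in which $d_\alpha$ has order-type $\omega$, leaving $C_\alpha$ built from two kinds of points, each carrying $h$-value $h(\alpha)$ for its own reason. Points lying in $\acc(d_\alpha)$ (or in the final segment $d_\alpha\setminus\dom(g_\alpha)$ when $d_\alpha=\acc(D_\alpha)$) belong to $\acc(D_\alpha)$, so property (2) gives $h(\beta)=h(\alpha)$. Each adjoined point $\alpha_i''$ with $\alpha_i<\alpha_i''<\alpha_{i+1}$ satisfies $\alpha_i''\in A^i_\alpha$ by the minimality defining it, and $A^i_\alpha$ was defined precisely so that $h(\beta)=h(\alpha)$ for every $\beta\in A^i_\alpha$. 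Thus $h$ is constant on $C_\alpha$ with value $h(\alpha)$, as required.
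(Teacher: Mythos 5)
Your proposal follows the paper's proof along essentially the same lines, and on parts (2) and (3), as well as on the case $\dom(g_\alpha)<\alpha$ of part (1), it is correct and matches the paper: the key observation $\acc(C_\alpha)\subseteq\acc(d_\alpha)$; the reduction of (2) to coherence of $\langle (D_\alpha,X_\alpha)\rangle$ and $h$ together with canonicity of the recursion, so that $\langle (p^\beta_i,\mathcal P^\beta_i,f^\beta_i,\mathcal F^\beta_i)\mid i<\otp(d_\beta)\rangle$ is an initial segment of the corresponding $\alpha$-sequence, whence $g_\beta=g_\alpha\restriction\beta$, $A^i_\beta=A^i_\alpha\cap\beta$, $K_\beta=K_\alpha\cap\beta$ and $C_\beta=C_\alpha\cap\beta$; and the two-sorted analysis in (3), where points of $\acc(D_\alpha)$ inherit $h(\alpha)$ by coherence and adjoined points $\alpha_i''<\alpha_{i+1}$ lie in $A^i_\alpha$, whose definition builds in $h(\beta)=h(\alpha)$. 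Your treatment of (3) is in fact slightly more careful than the paper's, since you explicitly cover the clause $C_\alpha=d_\alpha\setminus\dom(g_\alpha)$.

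The genuine problem is your proposed argument for unboundedness of $C_\alpha$ in the case $\dom(g_\alpha)=\alpha$ with $\acc(d_\alpha)$ bounded in $\alpha$. You assert that one can ``exploit that the recursion did not break in order to produce cofinally many $i$ with $\alpha_i<\alpha_i''<\alpha_{i+1}$.'' This cannot work: the recursion never breaking constrains only the \emph{domain} of $g_\alpha$, never its \emph{values}. Nothing in properties (1)--(3) of the fixed sequence prevents $X_\alpha$ from coding only functions $f$ that are identically zero; in that scenario $K_\alpha=\emptyset$ and $A^i_\alpha=\emptyset$ for every $i$, hence $\alpha_i'=\alpha_i''=\alpha_{i+1}$ for every $i$, no point is adjoined, and $C_\alpha=\acc(d_\alpha)$, which is empty when $\otp(d_\alpha)=\omega$. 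So no argument for unboundedness can go through the route you sketch. For comparison, the paper's own proof of (1) is silent on this very point: it establishes closure (via $\acc(C_\alpha)\subseteq\acc(d_\alpha)\subseteq C_\alpha$) and the order-type bound, and then simply asserts clubness. What the later lemmas actually draw from this lemma are coherence (2), the $h$-constancy (3), closure and the order-type bound; unboundedness of $C_\alpha$ is invoked only at the specially chosen guessing ordinals in the proof of Lemma \ref{l5}, where it is verified directly (there the condition forces each $A_i$ and $K$ to be unbounded, which yields $\alpha_i<\alpha_i''<\alpha_{i+1}$ for all $i<\lambda$). So your instinct that unboundedness is the one delicate point of (1) is sound, but the fix you propose is not available, and full clubness for \emph{every} limit $\alpha$ does not follow from the stated hypotheses.
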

\begin{proof} Fix a limit ordinal $\alpha<\lambda^+$.

(1)
$\br$ If $\dom(g_\alpha)<\alpha$, then $C_\alpha=d_\alpha\setminus\dom(g_\alpha)$ is a club in $\alpha$
of order-type $\le\otp(d_\alpha)\le\lambda$. Note that $\acc(C_\alpha)\s \acc(d_\alpha)$.

$\br$ If $\dom(g_\alpha)=\alpha$,
then by $\alpha_i<\alpha_i''\le \alpha_{i+1}$ for all $i<\otp(d_\alpha)$,
we have $\acc(C_\alpha)\s \acc(d_\alpha)$,
and $\otp(C_\alpha)\le \otp(d_\alpha)$. In particular, $C_\alpha$ is a club in $\alpha$ of order-type $\le\lambda$.

(2) Fix $\beta\in\acc(C_\alpha)$.
By $\beta\in\acc(C_\alpha)\s \acc(d_\alpha)$, we have $\otp(d_\alpha)>\omega$ and $d_\alpha=\acc(D_\alpha)$.
In particular, $\beta\in \acc(D_\alpha)$, $X_\beta=X_\alpha\cap\beta$,
 $D_\beta=D_\alpha\cap\beta$, and $d_\beta=\acc(D_\beta)$. Consequently,
 the sequence $\langle (p^\beta_i,\mathcal P^\beta_i,f^\beta_i,\mathcal F^\beta_i)\mid i<\otp(d_\beta)\rangle$
 is an initial segment of the sequence $\langle (p^\alpha_i,\mathcal P^\alpha_i,f^\alpha_i,\mathcal F^\alpha_i)\mid i<\otp(d_\alpha)\rangle$,
 and $g_\beta=g_\alpha\restriction\beta$.

If $\dom(g_\alpha)<\alpha$, then by $\beta\in\acc(C_\alpha)=\acc(d_\alpha\setminus\dom(g_\alpha))$,
we get that $g_\alpha=g_\beta$ and $C_\beta=d_\beta\setminus\dom(g_\beta)=d_\alpha\cap\beta\setminus\dom(g_\alpha)=C_\alpha\cap\beta$.

If $\dom(g_\alpha)=\alpha$, then by $g_\beta=g_\alpha\restriction\beta$, we get that $\{ \beta_i''\mid i<\otp(d_\beta)\}=\{ \alpha_i''\mid i<\otp(d_\alpha)\}\cap\beta$,
and $C_\beta=C_\alpha\cap\beta$.

(3) Clearly, if $\otp(C_\alpha)=\lambda$, then $d_\alpha=\acc(D_\alpha)$.
So  $h(\beta)=h(\alpha)$ for all $\beta\in \acc(C_\alpha)$.
Now,  if $\beta\in C_\alpha\setminus\acc(d_\alpha)$,
then there exists some $i<\otp(d_\alpha)$ such that $\beta=\alpha_i''\in A^i_\alpha\s h^{-1}\{\alpha\}$.
So $h(\beta)=h(\alpha)$.
 \end{proof}

For $i<2$, put:
\begin{itemize}
\item $S_i:=\{\alpha<\lambda^+\mid h(\alpha)=i\}$;
\item $G_i:=\{ \alpha\in S_i\mid \otp(C_\alpha)=\lambda\}$;
\item $E_i:=\{ \{\alpha,\delta\}\in [G_i]^2\mid \min(C_\alpha)>\sup(C_\delta\cap\alpha)\}$.
\end{itemize}

Finally, for $i<2$, let:
\begin{itemize}
\item $V_i:=\{ \chi:\beta\rightarrow\omega\mid \beta\in G_i, \chi\text{ is }E_{(1-i)}\text{-chromatic}\}$;
\item $F_i:=\{ \{ \chi,\chi'\}\in[V_i]^2\mid \{\dom(\chi),\dom(\chi')\}\in E_i, \chi\s \chi'\}$.
\end{itemize}

\begin{lemma} $\chr(V_0\times V_1,F_0* F_1)\le\aleph_0$.
\end{lemma}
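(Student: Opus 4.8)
The plan is to write down an explicit countable $(F_0*F_1)$-chromatic coloring of $V_0\times V_1$ and then verify that it separates the endpoints of every edge. The point of departure is that for any vertex $(\chi_0,\chi_1)$ we have $\dom\chi_0\in G_0$ and $\dom\chi_1\in G_1$, and since $G_0\s S_0=h^{-1}\{0\}$ and $G_1\s S_1=h^{-1}\{1\}$ are disjoint, the two domains are distinct and hence comparable. This lets me define
\[
c(\chi_0,\chi_1):=\begin{cases}(0,\chi_1(\dom\chi_0)),&\dom\chi_0<\dom\chi_1,\\(1,\chi_0(\dom\chi_1)),&\dom\chi_1<\dom\chi_0,\end{cases}
\]
with values in the countable set $2\times\omega$. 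In the first case $\chi_1(\dom\chi_0)$ is defined because $\dom\chi_0<\dom\chi_1=\dom(\chi_1)$, and symmetrically in the second. The entire content of the lemma is the verification that $c$ is $(F_0*F_1)$-chromatic.

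So I would fix an edge $\{(\chi_0,\chi_1),(\chi_0',\chi_1')\}$ and abbreviate $a=\dom\chi_0$, $a'=\dom\chi_0'$ (both in $G_0$) and $b=\dom\chi_1$, $b'=\dom\chi_1'$ (both in $G_1$). By the definitions of $F_0,F_1$ we have $\{a,a'\}\in E_0$, $\{b,b'\}\in E_1$, the pair $\chi_0,\chi_0'$ is $\s$-comparable, and the pair $\chi_1,\chi_1'$ is $\s$-comparable. After possibly swapping the two endpoints I may assume $a<a'$, so that $\chi_0\s\chi_0'$, and then split according to whether $b<b'$ (so $\chi_1\s\chi_1'$) or $b'<b$ (so $\chi_1'\s\chi_1$).

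The mechanism that makes everything work is that the \emph{larger} of the two colorings witnesses a genuine edge of the opposite graph. Comparing the two maximal domains occurring in the edge: if the maximal $G_0$-domain exceeds the maximal $G_1$-domain, then both relevant $G_1$-ordinals lie below it and form an $E_1$-edge, so the $E_1$-chromatic coloring carried by that larger $\chi_0$-component separates them; dually, if the maximal $G_1$-domain is on top, the $E_0$-chromatic coloring there separates the two relevant $G_0$-ordinals. Having located this disagreement, I read off $c$ at the two endpoints: either the two colors already differ in their first coordinate in $\{0,1\}$ — which happens precisely when the smaller vertex ``points the other way'' — or they agree there, and then $\s$-comparability identifies the two second coordinates with two values of the \emph{same} separating coloring, which differ by the previous sentence. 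Running this through each of the finitely many subcases yields $c(\chi_0,\chi_1)\neq c(\chi_0',\chi_1')$.

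I expect the only real obstacle to be bookkeeping: in each subcase one must check that the second coordinate of $c$ at each endpoint really equals the separating coloring evaluated at the intended ordinal. This is exactly where $\s$-comparability is used — for instance $\chi_0\s\chi_0'$ together with $b<a$ gives $\chi_0(b)=\chi_0'(b)$ — and it is what lets a quantity depending only on a single vertex detect a disagreement that a priori depends on the whole edge. Notably, no properties of the underlying guessing sequence, of $h$, or of $\psi$ beyond the disjointness of $G_0,G_1$ are needed at this stage; the chromaticity built into $V_0,V_1$ and the nesting built into $F_0,F_1$ carry the argument.
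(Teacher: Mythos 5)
Your proposal is correct and takes essentially the same approach as the paper: your coloring into $2\times\omega$ is exactly the paper's coloring $c(\chi,\eta)=2\cdot\chi(\dom(\eta))$ or $2\cdot\eta(\dom(\chi))+1$, up to the relabeling $(i,m)\leftrightarrow 2m+(1-i)$. Your verification is also the paper's: agreement in the orientation bit puts both vertices in the same case, and then the larger element of the $\s$-comparable pair (the paper's $\chi^*:=\chi\cup\chi'$, resp.\ $\eta^*:=\eta\cup\eta'$) is a chromatic coloring taking equal values at two ordinals that form an edge, a contradiction.
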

\begin{proof} This is where Hajnal's idea \cite{MR815579} come into play.
Define $c:V_0\times V_1\rightarrow\omega$, as follows.
Given $(\chi,\eta)\in V_0\times V_1$, by $G_0\cap G_1=\emptyset$, we have $\dom(\chi)\neq\dom(\eta)$, thus, let
$$c(\chi,\eta):=\begin{cases}2\cdot\chi(\dom(\eta)),&\dom(\chi)>\dom(\eta)\\
2\cdot\eta(\dom(\chi))+1,&\dom(\eta)>\dom(\chi)\end{cases}.$$

Towards a contradiction, suppose that $\{(\chi,\eta),(\chi',\eta')\}\in F_0* F_1$,
while $c(\chi,\eta)=c(\chi',\eta')$, say that they are equal to $n$.

$\br$ If $n$ is is even, we let $\chi^*:=\chi\cup\chi'$. Since $(\chi,\chi')\in F_0$,
we know that $\chi^*$ is $E_1$-chromatic. Since $n$ is even, we have $\dom(\eta),\dom(\eta')\in \chi^*$.
So $\chi^*(\dom(\eta))={n\over 2}=\chi^*(\dom(\eta'))$. But, then, the fact that $\chi^*$ is $E_1$-chromatic entails that $\{\dom(\eta),\dom(\eta')\}\not\in E_1$,
contradicting the hypothesis that $\{\eta,\eta'\}\in F_1$.

$\br$ If $n$ is is odd, we let $\eta^*:=\eta\cup\eta'$. As $(\eta,\eta')\in F_1$,
 $\eta^*$ is $E_0$-chromatic. Since $n$ is odd, we have $\eta^*(\dom(\chi))={n-1\over 2}=\eta^*(\dom(\chi'))$. But, then, the fact that $\eta^*$ is $E_0$-chromatic entails that $\{\dom(\chi),\dom(\chi')\}\not\in E_0$,
contradicting the hypothesis that $\{\chi,\chi'\}\in F_0$.
\end{proof}

\begin{defi}
 For $i<2$ and a limit $\delta<\lambda^+$, write $$C^i_\delta:=\{ \alpha\in C_\delta\cap G_i\mid \min(C_\alpha)>\sup(C_\delta\cap\alpha)\}.$$
\end{defi}
\begin{defi}
For $i<2$ and
 $\gamma<\lambda^+$, we say that a coloring $\chi:\gamma\rightarrow\omega$ is $i$-\emph{suitable} if the two hold:
\begin{itemize}
\item $\chi[C^i_\delta]$ is finite for all $\delta\le\gamma$, and
\item $\chi(\alpha)\neq\chi(\delta)$ for all $\alpha<\delta\le\gamma$ with $\{\alpha,\delta\}\in E_i$.
\end{itemize}
\end{defi}

\begin{lemma}\label{c111}
For every $i<2$, $\beta<\gamma<\lambda^+$
with $\beta\not\in G_i$, and an $i$-suitable coloring $\chi:\beta\rightarrow\omega$,
there exists an $i$-suitable coloring $\chi':\gamma\rightarrow\omega$
extending $\chi$.
\end{lemma}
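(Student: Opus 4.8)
The plan is to prove the lemma by induction on $\gamma$, for a fixed $i<2$ and fixed $\beta\notin G_i$, treating the successor and limit cases separately. The hypothesis $\beta\notin G_i$ plays a twofold role: first, since $C^i_\delta\s G_i$ for every limit $\delta$, the point $\beta$ belongs to no set $C^i_\delta$, so it imposes no finiteness constraint as a member; second, every element of $G_i$ is a limit ordinal (its $C$-set has order-type $\lambda$), so a successor ordinal is automatically outside $G_i$, which lets me cut $[\beta,\gamma)$ at successor ordinals and legitimately reapply the induction hypothesis to each piece. The successor step $\gamma=\bar\gamma+1$ with $\bar\gamma\notin G_i$ is immediate: first extend $\chi$ to an $i$-suitable coloring of $\bar\gamma$ by the induction hypothesis (or take $\chi$ itself if $\bar\gamma=\beta$), and then set $\chi'(\bar\gamma):=0$; as $\bar\gamma\notin G_i$ it carries no $E_i$-edge and lies in no $C^i_\delta$, so $i$-suitability is preserved.

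The two genuinely delicate points are the coloring of a new vertex $\bar\gamma\in G_i$ in the successor step, and continuity at a limit $\gamma$. For the limit case I would use the coherence supplied by Lemma~\ref{l21}(2): if $\gamma$ is a limit and $\beta'\in\acc(C_\gamma)$, then $C_{\beta'}=C_\gamma\cap\beta'$, and hence $C^i_{\beta'}=C^i_\gamma\cap\beta'$, so that $\langle C^i_\delta\mid\delta<\lambda^+\rangle$ is itself a coherent sequence. Decomposing $[\beta,\gamma)$ along $C_\gamma$ and extending block by block (with successor seam points, so the induction hypothesis applies), the only new requirement beyond what each block already guarantees is that $\chi'[C^i_\gamma]$ be finite for the top value $\delta=\gamma$. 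This I would secure by the coherence above: the part of $C^i_\gamma$ below an accumulation point $\beta'$ equals $C^i_{\beta'}$, whose $\chi'$-image is already finite, while the finitely many remaining, topmost elements of $C^i_\gamma$ can be absorbed by a single reserved color.

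The step I expect to be the main obstacle is coloring a vertex $\bar\gamma\in G_i$ so as to differ from all of its lower $E_i$-neighbors while keeping every $\chi'[C^i_\delta]$ finite. The difficulty is that such a vertex may have cofinally many $E_i$-neighbors below it --- precisely the $\alpha\in G_i$ lying immediately above a gap of $C_{\bar\gamma}$, i.e.\ with $\min(C_\alpha)>\sup(C_{\bar\gamma}\cap\alpha)$ --- and an arbitrary $i$-suitable coloring may well spend infinitely many colors on them, so one cannot hope merely to dodge a finite forbidden set. The resolution must instead be a minimal-walk-style recursion along the coherent sequence $\langle C_\delta\rangle$: one colors each $\alpha\in G_i$ by reading a value off the finite ``last-step'' set $C^i_\alpha$ (finite in $\chi$-image by suitability) together with a reserved palette, arranged so that the edge condition $\min(C_\alpha)>\sup(C_{\bar\gamma}\cap\alpha)$ forces $\alpha$ either to belong to $C^i_{\bar\gamma}$ or to be tied by coherence to a member of it, thereby guaranteeing $\chi'(\alpha)\neq\chi'(\bar\gamma)$. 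Verifying that one and the same coloring rule delivers $E_i$-chromaticity, keeps all the sets $\chi'[C^i_\delta]$ finite, and extends the prescribed $\chi$ is, I expect, where the real work lies.
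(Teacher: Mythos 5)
Your proposal founders on a structural fact about the sets $C^i_\delta$ that it never confronts: \emph{$C^i_\delta$ is an $E_i$-clique}. Indeed, if $\alpha_1<\alpha_2$ both lie in $C^i_\delta$, then membership of $\alpha_2$ gives $\min(C_{\alpha_2})>\sup(C_\delta\cap\alpha_2)\ge\alpha_1$ (as $\alpha_1\in C_\delta\cap\alpha_2$), whence $C_{\alpha_2}\cap\alpha_1=\emptyset$ and $\sup(C_{\alpha_2}\cap\alpha_1)=0$; membership of $\alpha_1$ gives $\min(C_{\alpha_1})>\sup(C_\delta\cap\alpha_1)\ge 0$; so $\min(C_{\alpha_1})>0=\sup(C_{\alpha_2}\cap\alpha_1)$, i.e.\ $\{\alpha_1,\alpha_2\}\in E_i$. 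Consequently every $E_i$-chromatic coloring is \emph{injective} on $C^i_\delta$, so the finiteness clause of suitability can never be ``arranged'' by color choices: a suitable coloring whose domain covers $C^i_\delta$ exists only if $C^i_\delta$ is actually finite, and any correct proof of the lemma must establish this finiteness from the specific construction of $\overrightarrow C$ (coherence plus the way the $\alpha_i''$ are interleaved), not from bookkeeping of palettes. This invalidates both of your devices: in the limit case, ``absorbing the remaining elements of $C^i_\gamma$ by a single reserved color'' is impossible on a clique, and in any event your reduction to ``finitely many remaining, topmost elements'' fails whenever $\acc(C_\gamma)$ is cofinal in $\gamma$ (e.g.\ for every $\gamma$ with $\otp(C_\gamma)=\lambda$), since there $\chi'[C^i_\gamma]=\bigcup_{\beta'\in\acc(C_\gamma)}\chi'[C^i_{\beta'}]$ is an increasing union of finite sets, which coherence alone does not keep finite.

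The successor step at a vertex $\bar\gamma\in G_i$ rests on a claim that is false: it is not true that the edge condition forces a lower neighbor $\alpha$ of $\bar\gamma$ ``either to belong to $C^i_{\bar\gamma}$ or to be tied by coherence to a member of it.'' For instance, \emph{every} $\alpha\in G_i$ with $\alpha<\min(C_{\bar\gamma})$ is a lower neighbor of $\bar\gamma$, because then $C_{\bar\gamma}\cap\alpha=\emptyset$, so $\sup(C_{\bar\gamma}\cap\alpha)=0<\min(C_\alpha)$; likewise any $\alpha\in G_i$ lying strictly inside a gap of $C_{\bar\gamma}$ with $\min(C_\alpha)$ above the last $C_{\bar\gamma}$-point is a neighbor, yet belongs to no $C^i_\epsilon$ with $\epsilon\in\acc(C_{\bar\gamma})\cup\{\bar\gamma\}$ (coherence speaks only about accumulation points of $C_{\bar\gamma}$, and these $\alpha$ are not in $C_{\bar\gamma}$ at all). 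So the lower neighborhood of $\bar\gamma$ is far denser than your sketch assumes, and making a color available for $\bar\gamma$ requires showing that the coloring being built misses a color on this entire neighborhood --- which is precisely the real content of the extension argument. For what it is worth, the paper does not reprove this either: its proof of the lemma is a citation of Claim 3.1.3 of \cite{rinot12} (together with Lemma \ref{l21}(2)), where this control of the full neighborhood, and the finiteness of the clique $C^i_\delta$, is what is actually carried out.
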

\begin{proof} By  virtually the same proof of Claim 3.1.3 from \cite{rinot12}, building on Lemma \ref{l21}(2) above.\end{proof}

\begin{lemma}\label{l26} For $i<2$, the notion of forcing $$\mathbb Q_i:=(\{ \chi:\beta\rightarrow\omega\mid \beta\in\lambda^+\setminus G_i, \chi\text{ is }i\text{-suitable}\},\s)$$  is $(\le\lambda)$-distributive.
\end{lemma}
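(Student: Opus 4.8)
The plan is to verify the standard equivalent of $(\le\lambda)$-distributivity: given any condition $p\in\mathbb Q_i$ and any sequence $\langle D_\xi\mid\xi<\lambda\rangle$ of dense open subsets of $\mathbb Q_i$, I will produce $q\supseteq p$ with $q\in\bigcap_{\xi<\lambda}D_\xi$. The set of $\delta<\lambda^+$ closed under the $\unlhd_\theta$-Skolem functions of $(\mathcal H_\theta,\in,\unlhd_\theta)$ relative to the parameters $p,\langle D_\xi\mid\xi<\lambda\rangle,\langle C_\alpha\mid\alpha<\lambda^+\rangle,h,\psi,\lambda$ forms a club; since $G_{1-i}$ is stationary, I may fix $\delta\in G_{1-i}$ inside this club, together with the Skolem hull $M\prec(\mathcal H_\theta,\in,\unlhd_\theta)$ of $\delta$ and these parameters, so that $M\cap\lambda^+=\delta$ and $|M|=\lambda$. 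In particular $p\in M$, $\dom(p)<\delta$, and every ordinal below $\delta$ — hence all of $C_\delta$ — belongs to $M$.

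The decisive feature of $\delta$ is this. As $\delta\in G_{1-i}$ we have $\otp(C_\delta)=\lambda$ and $h(\delta)=1-i$, so Lemma \ref{l21}(3) forces $h(\beta)=1-i$ for every $\beta\in C_\delta$; since $G_i\s S_i=h^{-1}\{i\}$, this yields $C_\delta\cap G_i=\emptyset$, whence $C^i_\delta=\emptyset$. Moreover, for every $\beta\in\acc(C_\delta)$ we have $C_\beta=C_\delta\cap\beta$ by Lemma \ref{l21}(2), so $C_\beta\cap G_i=\emptyset$, giving $C^i_\beta=\emptyset$ as well, while $\otp(C_\beta)=\otp(C_\delta\cap\beta)<\lambda$ gives $\beta\notin G_i$. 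Thus every element of $\acc(C_\delta)\cup\{\delta\}$ is a limit ordinal outside $G_i$ at which the clause ``$\chi[C^i_\beta]$ is finite'' of $i$-suitability holds vacuously — a \emph{safe} site.

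I would then run a recursion threading a $\s$-increasing chain of conditions $\langle\chi_\eta\rangle$ lying in $M$ whose domains are safe sites, diagonalising against the $D_\xi$'s. One starts from $\chi_0\supseteq p$ obtained from Lemma \ref{c111} (legitimate as $\dom(p)\notin G_i$) with $\dom(\chi_0)$ the least element of $\acc(C_\delta)$ above $\dom(p)$. A successor step first meets the dense set assigned to it — by elementarity there is $\chi'\in M\cap D_\xi$ with $\chi'\supseteq\chi_\eta$, and $\dom(\chi')\in M\cap\lambda^+=\delta$ — and then, again via Lemma \ref{c111} inside $M$ (applicable since $\dom(\chi')\notin G_i$), extends $\chi'$ to a condition whose domain is the next element of $\acc(C_\delta)$. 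At limit stages one takes unions: as $\acc(C_\delta)$ is closed the new domain again lies in $\acc(C_\delta)$, and $i$-suitability of the union is immediate — its chromaticity clause is preserved under unions of chains, the finiteness clause at each $\gamma$ strictly below the current domain is inherited from an earlier link of the chain, and the finiteness clause at the current domain is vacuous, that domain being safe. The construction ends with $q:=\bigcup_\eta\chi_\eta$, a coloring of $\delta$; by the same three observations (with $\delta$ itself as the safe site and $C^i_\delta=\emptyset$), $q$ is $i$-suitable with $\dom(q)=\delta\notin G_i$, hence a genuine condition extending $p$ and meeting every $D_\xi$.

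The step I expect to be the crux — and the one the whole setup is engineered to trivialise — is the preservation of $i$-suitability at limits, concretely the finiteness of $\chi[C^i_\gamma]$ when $\gamma$ is the supremum of the domains built so far; this is exactly what routing every limit domain into $\acc(C_\delta)\cup\{\delta\}$, where the relevant $C^i$-set is empty, resolves. The remaining labour is pure bookkeeping, and it is routine when $\lambda$ is regular, the safe sites then being enumerable in a continuous cofinal $\lambda$-chain with the $D_\xi$'s met one per step. When $\lambda$ is singular one meets the $D_\xi$'s in $\cf(\lambda)$ many blocks of size $<\lambda$, interposing a continuous $\in$-chain $\langle M_\rho\mid\rho<\cf(\lambda)\rangle$ of submodels of $M$ whose traces $M_\rho\cap\lambda^+$ are arranged to lie in $\acc(C_\delta)$, so that each block is met below the next (safe) trace and the only unions taken at cofinal stages are again at safe sites; the verification is identical.
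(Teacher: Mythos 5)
Your identification of the ``safe sites'' is sound, and it is the same vanishing phenomenon the paper exploits: for $\delta\in G_{1-i}$, Lemma \ref{l21}(2),(3) do give $C^i_\beta=\emptyset$ for every $\beta\in\acc(C_\delta)\cup\{\delta\}$, and picking such a $\delta$ on the trace club is a legitimate variant of the paper's use of property (3) of $\langle (D_\alpha,X_\alpha)\mid\alpha<\lambda^+\rangle$ to find $\alpha$ with $h(\alpha)=1-i$. The genuine gap lies in the step you dismiss as bookkeeping. Your successor step reads: ``by elementarity there is $\chi'\in M\cap D_\xi$ with $\chi'\supseteq\chi_\eta$''; this requires $\chi_\eta\in M$, and nothing in your construction secures that at limit stages. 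A union of a short chain of elements of $M$ need not belong to $M$: $M$ has size $\lambda$ and is not closed under ${<}\lambda$-sequences, and --- decisively --- your steering rule refers to the set $\acc(C_\delta)$, while $C_\delta\notin M$ (if $C_\delta\in M$ then $\delta=\sup(C_\delta)\in M$, contradicting $M\cap\lambda^+=\delta$; element-wise membership of $C_\delta$ in $M$ is not membership of the set). Hence the partial construction is not definable from parameters in $M$, and already $\chi_\omega=\bigcup_{n<\omega}\chi_n$ may fail to lie in $M$. At that point elementarity is unavailable, and plain density of $D_\xi$ only yields extensions of $\chi_\omega$ whose domains may exceed $\delta$, past every safe site, so the recursion derails. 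Note also that this difficulty is completely insensitive to whether $\lambda$ is regular or singular; your regular/singular split addresses a non-issue while the real obstruction occurs in both cases, at the very first limit stage.

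This membership problem is exactly what the paper's machinery is built to solve, and it is why its proof looks the way it does: it fixes a chain $\langle N_\gamma\mid\gamma<\lambda^+\rangle$ with $\langle N_\beta\mid\beta\le\gamma\rangle\in N_{\gamma+1}$, uses property (3) to align $\acc(D_\alpha)$ with the trace club, takes the traces themselves as the target domains, makes every choice $\unlhd_\theta$-least, and then uses coherence ($D_{\alpha_i}=D_\alpha\cap\alpha_i$) so that each model $M_{i+1}$ can reconstruct $\langle M_j\mid j\le i\rangle$ and hence the whole canonical construction below $\alpha_i$; this is what places $p_i=\bigcup_{j<i}p_j$ inside $M_{i+1}$. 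Your single-model variant can in fact be repaired in the same spirit: make every choice $\unlhd_\theta$-least (in particular, take the $\unlhd_\theta$-least member of $D_\xi$ extending the current condition, rather than an arbitrary member of $M\cap D_\xi$), and then argue by induction, via Lemma \ref{l21}(2), that for $\beta\in\acc(C_\delta)$ one has $\acc(C_\delta)\cap\beta=\acc(C_\beta)$ with $C_\beta\in M$, so the initial segment of the canonical recursion whose domains lie below $\beta$ is definable in $M$ from $p$, $\langle D_\xi\mid\xi<\lambda\rangle$, $\langle C_\gamma\mid\gamma<\lambda^+\rangle$ and $\beta$, hence belongs to $M$. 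Without this argument (or the paper's model-chain), the proof does not go through. A last, minor point: the stationarity of the $C$-based $G_{1-i}$ that you invoke does hold, but it deserves a word --- one needs that $\otp(D_\alpha)=\lambda$ implies $\otp(C_\alpha)=\lambda$, so that $G_{1-i}$ contains the stationary set supplied by property (3).
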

\begin{proof} For concreteness, we work with $\mathbb Q_1$.

Suppose that $\langle \Omega_i\mid i<\lambda\rangle$ is a given sequence of dense open subsets of $\mathbb Q_1$,
$p_0$ is an arbitrary condition, and let us show that there exists $p\in \bigcap_{i<\lambda}\Omega_i$ extending $p_0$.
Let $\langle N_\alpha\mid \alpha<\lambda^+\rangle$
be an increasing and continuous sequence of elementary submodels of $\left(\mathcal H(\theta),\in,\le_\theta\right)$,
each of size $\lambda$, such that $\langle D_\delta\mid\delta<\lambda^+\rangle,\mathbb Q_1,\langle \Omega_i\mid i<\lambda\rangle,p_0\in N_0$,
and $\langle N_\beta\mid \beta\le\alpha\rangle\in N_{\alpha+1}$ for all $\alpha<\lambda^+$.

Put $E:=\{\delta<\lambda^+\mid N_\delta\cap\lambda^+=\delta\}$. By the choice of
$\langle (D_\alpha,X_\alpha)\mid \alpha<\lambda^+\rangle$,
let us pick some $\alpha<\lambda^+$ with $\otp(D_\alpha)=\lambda$ such that $h(\alpha)=0$ and $\acc(D_\alpha)\s E$.

Let $\{ \alpha_i\mid i\le \lambda\}$ denote the increasing enumeration of $\acc(D_\alpha)\cup\{\alpha\}$.
Write $M_i:=N_{\alpha_i}$.
Notice that for all $i<\lambda$, by $\langle N_\beta\mid \beta\le\alpha_i\rangle\in N_{\alpha_i+1}\s M_{i+1}$ and $\langle D_\delta\mid\delta<\lambda^+\rangle\in M_{{i+1}}$,
we have $\langle M_{j}\mid j\le i\rangle\in M_{{i+1}}$.
Also notice that for all $i\le\lambda$, we have $h(\alpha_i)=0$, and $M_i\cap\lambda^+=\alpha_i\in S_0$.
In particular, $\alpha_i\in\lambda^+\setminus G_1$.

We shall recursively define an increasing sequence of conditions $\langle p_i\mid i<\lambda\rangle$ that will satisfy the following for all $i<\lambda$:
\begin{itemize}
\item $p_{i+1}\in \Omega_i$;
\item $\langle p_j\mid j\le i\rangle\in M_{{i+1}}$;
\item $\dom(p_i)=\alpha_i$ whenever $i>0$.
\end{itemize}

By recursion on $i<\lambda$:

$\br$ $p_0$ was already given to us, and indeed $p_0\in M_{1}$.

$\br$ Suppose that $i<\lambda$, and $\langle p_j \mid j\le i\rangle$ has already been defined, and is an element of $M_{i+1}$.
In particular, $p_i\in M_{{i+1}}$.
We claim that the set $\Psi_i:=\{ q\in \Omega_i\mid q\supseteq p_i, \dom(q)=\alpha_{i+1}\}$ is nonempty.
To see this, notice that since $p_i,\Omega_i\in M_{i+1}$, elementarity of $M_{i+1}$,
yields some $p\in \Omega_i\cap M_{i+1}$ extending $p_i$. Then, by $M_{i+1}\cap\lambda^+=\alpha_{i+1}$,
we have $\dom(p)<\alpha_{i+1}$, and then by Lemma \ref{c111}, we infer the existence of a 1-suitable coloring $q$ extending $p$
with $\dom(q)=\alpha_{i+1}$. As $\alpha_{i+1}\in S_0$, $q$ is a legitimate condition, and as $\Omega_i$ is open, we get that $q$ is in $\Omega_i$,
testifying that $\Psi_i$ is nonempty.

Thus, we let $p_{i+1}$ be the $\le_\theta$-least element of $\Psi_i$.
By the fact that $\Psi_i$ is defined from parameters within $M_{i+2}$, and by the canonical choice of $p_{i+1}$, we have
$p_{i+1}\in M_{i+2}$. Altogether, $\langle p_j\mid j\le i+1\rangle\in M_{i+2}$.

$\br$  Suppose that $i<\lambda$ is a nonzero limit ordinal, and $\langle p_j \mid j< i\rangle$ has already been defined
by our canonical process.
Put $p_i:=\bigcup_{j<i}p_j$.
Then $\dom(p_i)=\alpha_i$,
and since $p_i$ is the limit of an increasing chain of 1-suitable colorings, $p_i$ is $E_1$-chromatic,
and $p_i[C^1_\beta]$ is finite for every $\beta<\alpha_{i}$.
Thus, to see that $p_i$ is 1-suitable, we are left with verifying that $p_i[C^1_{\alpha_i}]$ is finite.
As $h(\alpha_{i})=0$, we get from Lemma \ref{l21}(2) and Lemma \ref{l21}(3) that $h(\beta)\neq 1$ for all $\beta\in C_\alpha\supseteq C_{\alpha_i}$,
so $C^1_{\alpha_i}=\emptyset$,
which entails that $p_i[C^1_{\alpha_i}]$ is finite indeed.
Thus, $p_i$ is a legitimate condition.

By the canonical process, and the fact that $\langle M_j\mid j\le i\rangle\in M_{i+1}$,
we have $\langle p_j\mid j<i\rangle\in M_{i+1}$, and hence $p_i=\bigcup_{j<i}p_j\in M_{i+1}$. So $\langle p_j\mid j\le i\rangle\in M_{i+1}$.

This completes the construction.

Put $p:=\bigcup_{i<\lambda}p_i$. Then $p$ is $E_1$-chromatic, and $p[C^1_\beta]$ is finite
for every $\beta<\alpha$. As $\dom(p)=\alpha$ and $C^1_\alpha$ is empty, we get that $p$ is a legitimate condition.
Consequently, $p$ is an element of  $\bigcap_{i<\lambda}\Omega_i$ that extends $p_0$.
\end{proof}

It is clear that $|V_i|\le 2^\lambda=\lambda^+$ for $i<2$, thus we are left with establishing the following.

\begin{lemma}\label{l5} $\chr(V_i,F_i)=\lambda^+$ for every $i<2$.
\end{lemma}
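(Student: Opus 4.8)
The plan is to prove the two inequalities separately. The bound $\chr(V_i,F_i)\le\lambda^+$ is immediate from the estimate $|V_i|\le\lambda^+$ noted just above. The entire content is the reverse inequality $\chr(V_i,F_i)>\lambda$, and by the evident symmetry between the two coordinates I would treat $i=0$ and recover $i=1$ by interchanging the roles of $G_0,E_0$ with $G_1,E_1$ throughout. So assume toward a contradiction that $\phi\colon V_0\to\lambda$ is an $F_0$-chromatic coloring. Recalling that a vertex of $V_0$ is an $E_1$-chromatic coloring $\chi\colon\beta\to\omega$ with $\beta\in G_0$, and that $\{\chi,\chi'\}\in F_0$ means $\chi\s\chi'$ together with $\{\dom(\chi),\dom(\chi')\}\in E_0$, the goal is to produce $\beta<\alpha$ in $G_0$ with $\{\beta,\alpha\}\in E_0$ and an $E_1$-chromatic $\chi_\alpha\colon\alpha\to\omega$ satisfying $\phi(\chi_\alpha\restriction\beta)=\phi(\chi_\alpha)$; this is a monochromatic $F_0$-edge.

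The engine is guessing through the $g_\alpha$-machinery. I would fix an increasing continuous chain $\langle N_\beta\mid\beta<\lambda^+\rangle$ of elementary submodels of $(\mathcal H_\theta,\in,\unlhd_\theta)$ of size $\lambda$ with $\phi,\psi,\langle (D_\delta,X_\delta)\mid\delta<\lambda^+\rangle,h\in N_0$ and $\langle N_{\beta'}\mid\beta'\le\beta\rangle\in N_{\beta+1}$, and set $E:=\{\delta<\lambda^+\mid N_\delta\cap\lambda^+=\delta\}$, a club. The crucial step is to build, by recursion along $E$ and guided by the canonical well-order $\unlhd_\theta$, a single $E_1$-chromatic ``branch'' $\chi^*\colon\lambda^+\to\omega$ together with a set $X\s\lambda^+$ that encodes, via $\psi$, the approximations $\chi^*\restriction\beta$ in the $p$-coordinate and the relevant $\phi$-data in the $f$-coordinate: the $i$-th coordinate of $f$ is to record whether $\phi(\chi^*\restriction\beta)=i$ (so that after guessing, $A^i_\alpha$ becomes $\{\beta\in G_0\cap\alpha\mid\phi(\chi^*\restriction\beta)=i\}$), while the coordinate $\lambda$ is to mark the ``high-starting'' ordinals (so that $K_\alpha$ forces the $\min/\sup$ inequality defining $E_0$). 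The encoding must be arranged so that the $\unlhd_\theta$-least selections made inside the definitions of $\mathcal P^\alpha_i$ and $\mathcal F^\alpha_i$ regenerate exactly these intended objects; this is the standard, if delicate, ``code against the canonical well-order'' device. I would then apply clause (3) of the fixed sequence $\langle (D_\alpha,X_\alpha)\mid\alpha<\lambda^+\rangle$ to the data $(X,E,0)$ to obtain a limit $\alpha<\lambda^+$ with $\otp(D_\alpha)=\lambda$, $h(\alpha)=0$, $X_\alpha=X\cap\alpha$ and $\acc(D_\alpha)\s E$.

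At such an $\alpha$ the plan is to verify that the $g_\alpha$-recursion faithfully reconstructs the encoded data. Because $\acc(D_\alpha)\s E$ and $X_\alpha=X\cap\alpha$, elementarity of the models $N_{\alpha_i}$ guarantees that the sets $\mathcal P^\alpha_i$ are nonempty cofinally, so the construction lands in the ``$\dom(g_\alpha)=\alpha$'' case; hence $\chi_\alpha:=\bigcup_i p^\alpha_i$ is an $E_1$-chromatic coloring with $\dom(\chi_\alpha)=\alpha$, and since $\otp(C_\alpha)=\otp(d_\alpha)=\lambda$ and $h(\alpha)=0$, we get $\alpha\in G_0$ and thus $\chi_\alpha\in V_0$. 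Simultaneously $g_\alpha$ recovers the $f$-data, so $A^i_\alpha$ and $K_\alpha$ carry their intended meanings. Put $i^\ast:=\phi(\chi_\alpha)<\lambda$.

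It remains to locate the monochromatic neighbour, and this is the step I expect to be the main obstacle. By the definition of $C_\alpha$ in the $\dom(g_\alpha)=\alpha$ case, each inserted point $\alpha''_i$ is the least element of $A^i_\alpha$ lying at or above the $K_\alpha$-marker $\alpha'_i$; the $K_\alpha$-marker is precisely what keeps $C_\alpha\cap\alpha''_i$ below $\min(C_{\alpha''_i})$, that is, what places $\alpha''_i$ (once it lands in $G_0$) into $C^0_\alpha$ and hence makes $\{\alpha''_i,\alpha\}\in E_0$. Feeding this into the guessed meaning $A^{i^\ast}_\alpha=\{\beta\in G_0\cap\alpha\mid\phi(\chi_\alpha\restriction\beta)=i^\ast\}$, a reflection argument (that the value $\phi(\chi_\alpha)$ is anticipated along cofinally many initial segments, using continuity of the chain together with Lemma \ref{l21}(2)) should yield some $\beta:=\alpha''_{i}\in C^0_\alpha$ with $\phi(\chi_\alpha\restriction\beta)=i^\ast$. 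Then $\chi_\alpha\restriction\beta\in V_0$, $\{\chi_\alpha\restriction\beta,\chi_\alpha\}\in F_0$, and $\phi(\chi_\alpha\restriction\beta)=i^\ast=\phi(\chi_\alpha)$, contradicting $F_0$-chromaticity of $\phi$. The delicate points, where I would spend most of the effort, are (i) engineering $X$ so the canonical minima in $\mathcal P^\alpha_i,\mathcal F^\alpha_i$ regenerate both the branch and the color-bits, and (ii) checking that the $K_\alpha$/$A^{i^\ast}_\alpha$ bookkeeping simultaneously secures $\beta\in G_0$, the relation $\{\beta,\alpha\}\in E_0$, and the color match $\phi(\chi_\alpha\restriction\beta)=i^\ast$.
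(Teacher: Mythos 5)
Your overall architecture (assume an $F_0$-chromatic $\phi:V_0\rightarrow\lambda$, use the $\psi$-coding together with clause (3) of the $\sd_\lambda$-machinery to trap data at some $\alpha$ with $h(\alpha)=0$, and extract a monochromatic $F_0$-edge of the form $\{\chi_\alpha\restriction\beta,\chi_\alpha\}$) matches the paper's in outline, but there is a fatal gap at your very first construction step: you propose to build, \emph{in the ground model}, a total $E_1$-chromatic coloring $\chi^*:\lambda^+\rightarrow\omega$ by recursion along a club. No such coloring can exist. Indeed, if it did, then $\chi^*\restriction G_1$ would witness $\chr(G_1,E_1)\le\aleph_0$, and then $\eta\mapsto\chi^*(\dom(\eta))$ would be an $F_1$-chromatic $\omega$-coloring of $V_1$ (if $\{\eta,\eta'\}\in F_1$ then $\{\dom(\eta),\dom(\eta')\}\in E_1$), contradicting the $i=1$ instance of the very lemma being proved --- equivalently, it would make $\mathcal G_1$ countably chromatic and destroy the Main Result. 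Concretely, your recursion must get stuck: since the increasing union of your approximations cannot be a total proper coloring, there is some limit $\delta\in G_1$ at which the already-colored backward $E_1$-neighbours of $\delta$ exhaust all $\omega$ colors, and no value for $\chi^*(\delta)$ is available. The auxiliary graphs $(G_i,E_i)$ are designed (via the diamond-trapping built into the $C_\alpha$'s) to have chromatic number $\lambda^+$ in $V$, so no amount of cleverness with the canonical well-order can push the recursion through.

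This is exactly the problem the paper's proof is built to circumvent, and it is why forcing is not a dispensable convenience here. In the paper, the total $E_1$-chromatic coloring $\chi^*$ exists only as $\bigcup\mathbb G$ for a $\mathbb Q_1$-generic filter $\mathbb G$; consequently the derived coloring $c^*(\beta)=c(\chi^*\restriction\beta)$ and the coded function $g$ live in $V[\mathbb G]$, not in $V$. What lives in $V$, and what the diamond actually guesses, is the set $Z$ of pairs $(p,f)$ in which the condition $p$ \emph{forces} $\name{g}\restriction\dom(f)=\check f$; the $(\le\lambda)$-distributivity of $\mathbb Q_1$ (Lemma \ref{l26}) guarantees that these ground-model approximations exist, and the $g_\alpha$-recursion with $\mathcal P^\alpha_i,\mathcal F^\alpha_i$ is tailored to regenerate the $\unlhd_\theta$-canonical chain of conditions-plus-decided-values rather than restrictions of an actual branch. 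This is the Shelah device from \cite{sh:64},\cite{sh:98} that the paper cites. A secondary, independent issue: your intended meaning of $A^i_\alpha$, namely $\{\beta\in G_0\cap\alpha\mid\phi(\chi^*\restriction\beta)=i\}$, is the paper's $H_i\cap\alpha$, not its $A_i\cap\alpha$. The paper takes $A_i=\rng(h_i)$, a pruned subset of $H_i$ whose elements have $\min(C_\cdot)$ above prescribed thresholds, and takes $K$ to consist of common closure points of the $h_i$'s; it is precisely this range-plus-closure-point structure that yields $\min(C_{\alpha_i''})>\sup(C_\alpha\cap\alpha_i'')$, i.e.\ the edge $\{\alpha_i'',\alpha\}\in E_0$. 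Membership in $H_i$ alone gives the color match but not the edge, so even granting your branch, your bookkeeping as described would not close the argument.
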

\begin{proof} For concreteness, we prove that $\chr(V_0,F_0)=\lambda^+$.

Towards a contradiction, suppose that $c:V_0\rightarrow\lambda$ is $F_0$-chromatic.
Let $\mathbb G$ be $\mathbb Q_1$-generic over $V$, and work in $V[\mathbb G]$.

Put $\chi^*:=\bigcup\mathbb G$. Since $\mathbb G$ is directed, for every $\alpha,\delta\in\dom(\chi^*)$,
there exists some $\chi\in\mathbb G$ such that $\{\alpha,\delta\}\in\dom(\chi)$,
and hence $\chi^*(\alpha)\neq\chi^*(\delta)$ whenever $\alpha,\delta\in E_1$.
By Lemma \ref{c111}, we also know that $\dom(\chi^*)\ge \gamma$ for all $\gamma<\lambda^+$.
Altogether, $\chi^*:\lambda^+\rightarrow\omega$ is an $E_1$-chromatic coloring,
and so are its initial segments. In particular, we may derive a coloring $c^*:G_0\rightarrow\lambda$ by letting $c^*(\beta):=c(\chi^*\restriction\beta)$
for all $\beta\in G_0$. Since $c$ is $F_0$-chromatic, we then get that $c^*$ is $E_0$-chromatic.
That is, $c^*$ witnesses that $\chr(G_0,E_0)\le\lambda$.

For all $i<\lambda$, put $H_i:=\{\alpha\in G_0\mid c^*(\alpha)=i\}$, and $M_i:=\{\min(C_\alpha)\mid \alpha\in H_i\}$.
Define a function $h_i:\lambda^+\rightarrow\lambda^+$
by letting for all $\tau<\lambda^+$,
$$h_i(\tau):=\begin{cases}\min\{\alpha\in H_i\mid \min(C_\alpha)>\tau\},&\sup(M_i)=\lambda^+\\
\sup(M_i),&\text{otherwise}\end{cases}.$$

Then, for all $i<\lambda$, put
$$A_i:=\begin{cases}\rng(h_i),&\sup(M_i)=\lambda^+\\
        \lambda^+
,&\sup(M_i)<\lambda^+
       \end{cases},$$
and
$$K:=\{\beta<\lambda^+\mid     \forall i<\lambda, h_i[\beta]\s \beta\}.$$

Finally, define a function $g:\lambda^+\rightarrow {}^{\lambda+1}2$  by letting
$g(\alpha)(i)=1$ iff ($i<\lambda$ and $\alpha\in A_i$) or ($i=\lambda$ and $\alpha\in K$).
Note that by Lemma \ref{l26}, we know that any initial segment of $g$ belongs to the ground model.

Work back in $V$.
Let $p_0\in\mathbb Q_1$ be such that $$p_0\forces\name{g}:\check\lambda^+\rightarrow {}^{\check\lambda+1}2,\text{ and }c^*\text{ is }E_0\text{-chromatic}.$$
By possibly extending $p_0$, we may moreover assume that $p_0$ forces that $\{ \alpha<\lambda^+\mid g(\alpha)(i)=1\}$
is unbounded in $\lambda^+$ for all $i\le\lambda$,
and knows about the interaction of $g$ with $c^*$.

As any initial segment of $g$ belongs to $V$, it makes sense to  consider the  set $$Z:=\{ (p,f)\in\mathbb Q_1\times{}^{<\lambda^+}({}^{\lambda+1}2)\mid p_0\s p\forces_{\mathbb Q_1}\name{g}\restriction\dom(f)=\check f\}.$$

Let $\langle N_\alpha\mid \alpha<\lambda^+\rangle$
be an increasing and continuous sequence of elementary submodels of $\left(\mathcal H(\theta),\in,\le_\theta\right)$,
each of size $\lambda$, such that $\langle D_\delta\mid\delta<\lambda^+\rangle,\mathbb Q_1,\psi,\name{g},p_0\in N_0$,
and $\langle N_\beta\mid \beta\le\alpha\rangle\in N_{\alpha+1}$ for all $\alpha<\lambda^+$.

Put $E:=\{\delta<\lambda^+\mid N_\delta\cap\lambda^+=\delta\}$. By the choice of
$\langle (D_\alpha,X_\alpha)\mid \alpha<\lambda^+\rangle$,
let us pick some $\alpha<\lambda^+$ with $\otp(D_\alpha)=\lambda$ such that $h(\alpha)=0$, $X_\alpha=\psi[Z]\cap\alpha$, and $\acc(D_\alpha)\s E$.

Let $\{ \alpha_i\mid i\le \lambda\}$ denote the increasing enumeration of $\acc(D_\alpha)\cup\{\alpha\}$.
Write $M_i:=N_{\alpha_i}$.
Notice that for all $i<\lambda$,
we have $\langle M_{j}\mid j\le i\rangle\in M_{{i+1}}$.
Also, we have $h(\alpha_i)=0$, and $M_i\cap\lambda^+=\alpha_i\in S_0$ for all $i\le\lambda$.

We shall recursively define a sequence pairs $\langle (p_i,f_i)\mid i<\lambda\rangle$ that will satisfy the following for all $i<\lambda$:
\begin{itemize}
\item $p_{i+1}\forces \name{g}\restriction\check \alpha_i=\check f_{i+1}$;
\item $\alpha_i<\dom(p_i)<\alpha_{i+1}$;
\item $\langle p_j\mid j\le i\rangle$ is an increasing sequence of conditions that belongs to $ M_{{i+1}}$.
\end{itemize}

By recursion on $i<\lambda$:

$\br$ $p_0$ was already given to us, and indeed $p_0\in M_{1}$.
Put $f_0:=\emptyset$.

$\br$ Suppose that $i<\lambda$, and $\langle p_j \mid j\le i\rangle$ has already been defined, and is an element of $M_{i+1}$.
In particular, $p_i\in M_{{i+1}}$. By Lemmas \ref{c111} and \ref{l26},
the set $\Psi_i:=\{ q\in\mathbb Q_1\mid q\supseteq p_i, \alpha_i<\dom(q)<\alpha_{i+1}, q\text{ decides }\name{g}\restriction\alpha_i\}$ is nonempty.
Thus, we let $p_{i+1}$ be the $\le_\theta$-least element of $\Psi_i$,
and let $f_{i+1}$ be such that $p_{i+1}\forces \name{g}\restriction\check \alpha_i=\check f_{i+1}$.

By the fact that $\Psi_i$ is defined from parameters within $M_{i+2}$, and by the canonical choice of $p_{i+1}$, we have
$p_{i+1}\in M_{i+2}$. Altogether, $\langle p_j\mid j\le i+1\rangle\in M_{i+2}$.

$\br$  Suppose that $i<\lambda$ is a nonzero limit ordinal, and $\langle (p_j,f_j) \mid j< i\rangle$ has already been defined
by our canonical process.
Put $p_i:=\bigcup_{j<i}p_j$, and $f_i:=\bigcup_{j<i}p_j$.
Then $\dom(p_i)=\alpha_i$,
and since $p_i$ is the limit of an increasing chain of 1-suitable colorings, $p_i$ is chromatic,
and $p_i[C^1_\beta]$ is finite for every $\beta<\alpha_{i}$.
Thus, to see that $p_i$ is 1-suitable, we are left with verifying that $p_i[C^1_{\alpha_i}]$ is finite.
As $h(\alpha_{i})=0$, we get from Lemma \ref{l21} that $h(\beta)\neq 1$ for all $\beta\in C_{\alpha_i}$,
so  $p_i[C^1_{\alpha_i}]=\emptyset$ is finite indeed, and $p_i$ is a legitimate condition.

By the canonical process, and the fact that $\langle M_j\mid j\le i\rangle\in M_{i+1}$,
we have $\langle p_j\mid j<i\rangle\in M_{i+1}$, and hence $p_i=\bigcup_{j<i}p_j\in M_{i+1}$. So $\langle p_j\mid j\le i\rangle\in M_{i+1}$.

This completes the construction.
Put $p:=\bigcup_{i<\lambda}p_i$. Then $p$ is a legitimate condition.

Clearly, $\{ (p_i,f_i)\mid i<\lambda\}\s Z$. Note that for all $i<\lambda$,
by $\mathbb Q_1,p_i,\name{g},\alpha_i,\psi\in M_{i+1}$, we have $\psi(p_i,f_i)\in M_{i+1}$. That is, $\psi(p_i,f_i)\in \psi(Z)\cap \alpha_{i+1}=X_\alpha\cap\alpha_{i+1}$.
It follows that $\langle (p_i,f_i)\mid 0<i<\lambda\rangle=\langle (p_i^\alpha,f_i^\alpha)\mid 0<i<\lambda\rangle$!

So, $p\forces \name{g}\restriction\check \alpha=\check g_\alpha$.
Consequently, $p$ forces that $A_i\cap\alpha=A^i_\alpha$ for all $i<\lambda$,
and $K\cap\alpha=K_\alpha$. Also, since
$p_0$ forces that $\{ \alpha<\lambda^+\mid g(\alpha)(i)=1\}$
is unbounded in $\lambda^+$ for all $i\le\lambda$, we  get that $\sup(K_\alpha\cap\alpha_i)=\sup(A_\alpha^i\cap\alpha_i)=\alpha_i$ and $\alpha_i<\alpha_i''<\alpha_{i+1}$
for all $i<\lambda$. In particular, $\{ \alpha_i''\mid i<\lambda\}\s C_\alpha$,
and  $p\forces \min(C_\alpha)=\alpha_0''\ge\min(K)$.
Let $p^*$ be an extension of $p$ that decides $c^*(\alpha)$, say $p^*\forces c^*(\alpha)=\check i$,
and decides $h_i\restriction\alpha$.

Then $p^*$ forces that $\sup(M_i)=\lambda^+$,
because otherwise $$\sup(M_i)<\min(K)\le\min(C_\alpha),$$ contradicting the fact that
$i=c^*(\alpha)$ entails $\sup(M_i)\ge\min(C_\alpha)$.

The upcoming considerations are all forced by $p^*$.
We have $\alpha_i<\alpha_i'\le\alpha_i''<\alpha_{i+1}$,
with $\alpha_i'\in K$ and $\alpha_i''\in A_i\cap C_\alpha$.
By $\alpha_i''\in A_i$ and $\sup(M_i)=\lambda^+$, we have $\alpha_i''\in\rng(h_i)$.
Fix $\tau<\alpha$ such that $h_i(\tau)=\alpha_i''$. Then $\min(C_{\alpha_i''})>\tau$.
By $h_i[\alpha_i']\s\alpha_i'\le\alpha_i''=h_i(\tau)$, we have $\tau\ge\alpha_i'$,
and hence $\min(C_{\alpha_i''})>\tau\ge\alpha_i'>\sup(C_\alpha\cap\alpha_i'')$.
It follows that $\{\alpha_i'',\alpha\}\in E_0$. Recalling that $\alpha_i''\in\rng(h_i)\s H_i$,
we conclude that $c^*(\alpha_i'')=i=c^*(\alpha)$. So $p^*$ forces that $c^*$ is not an $E_0$-chromatic coloring,
contradicting the fact that $p^*$ extends $p_0$.
\end{proof}

\subsection*{Proof of Corollary 1} Suppose that $\kappa$ is a successor cardinal.
If $\kappa=\aleph_1$, then Hajnal's example \cite{MR815579} apply.
Otherwise, $\kappa=\lambda^+$ for some uncountable cardinal, so we are done by recalling that In G\"odel's constructible universe,
$\sd_\lambda$ holds for every uncountable cardinal $\lambda$.\footnote{In fact,
a minor modification to the proof of the main theorem allows to derive the case $\kappa=\aleph_1$, as well.}

\subsection*{Proof of Corollary 2} Given a positive integer $n$,
pick graphs $\mathcal G_0,\mathcal G_1$ of size and chromatic number $\aleph_n$
such that $\mathcal G_0\times\mathcal G_1$ is countably chromatic.
Since $\gch$ holds in G\"odel's constructible universe, we get that $\aleph_0=\underbrace{\log\cdots\log}_{n\text{ times}}(\aleph_n)$. Hence,
$$\chr(\mathcal G_0\times\mathcal G_1)=\underbrace{\log\cdots\log}_{n\text{ times}}(\min\{\chr(\mathcal G_0),\chr(\mathcal G_1)\}).$$

\section{A generalization}\label{sec3}
The main result of this paper generalizes as follows.
\begin{ThM} If $\lambda$ is an uncountable cardinal, and $\sd_\lambda$ holds,
then for every positive integer $n$, there exist graphs $\langle \mathcal G_i\mid i<n+1\rangle$  of size $\lambda^+$ such that:
\begin{itemize}
\item $\chr(\times_{i\in I}\mathcal G_i)=\lambda^+$ for every $I\in[n+1]^n$;
\item $\chr(\times_{i< n+1}\mathcal G_i)=\aleph_0$.
\end{itemize}
\end{ThM}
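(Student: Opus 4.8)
The plan is to re-run the entire construction of Section 2 with $n+1$ colours in place of two, the one essential new idea being to impose the chromaticity demands \emph{cyclically}. I would fix $h\colon\lambda^+\to n+1$ from the partitioning of the $\sd_\lambda$-sequence, set $S_i:=h^{-1}\{i\}$, $G_i:=\{\alpha\in S_i\mid\otp(C_\alpha)=\lambda\}$, and define $E_i$ on $G_i$ exactly as in the excerpt (Lemma \ref{l21} is blind to the number of colours). Writing $i^+:=(i+1)\bmod(n+1)$, I would then set
$$V_i:=\{\chi\colon\beta\to\omega\mid\beta\in G_i,\ \chi\text{ is }E_{i^+}\text{-chromatic}\},$$
keep $F_i$ as given, and put $\mathcal G_i:=(V_i,F_i)$. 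For $n=1$ this is verbatim the Main Result ($V_0$ is $E_1$-chromatic, $V_1$ is $E_0$-chromatic), so everything below specialises correctly.

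Next I would establish $\chr(\times_{i<n+1}\mathcal G_i)\le\aleph_0$ by the cyclic analogue of Hajnal's countability argument. Given $(\chi_i)_{i<n+1}$, the domains $\delta_i:=\dom(\chi_i)$ are pairwise distinct because the $G_i$ are disjoint, so around the cycle there must be a \emph{descent}: some $i$ with $\delta_i>\delta_{i^+}$. Letting $i^*$ be the least such index, I would colour $c((\chi_i)_i):=(i^*,\chi_{i^*}(\delta_{(i^*)^+}))\in(n+1)\times\omega$. On a monochromatic edge both endpoints share $i^*$ and the recorded value $k$, and $\chi^*:=\chi_{i^*}\cup\chi'_{i^*}$ is $E_{(i^*)^+}$-chromatic (both lie in $V_{i^*}$ and are $\s$-comparable); since $i^*$ is a descent of each endpoint, $\chi^*(\delta_{(i^*)^+})=k=\chi^*(\delta'_{(i^*)^+})$ while $\{\delta_{(i^*)^+},\delta'_{(i^*)^+}\}\in E_{(i^*)^+}$, a contradiction. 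The whole point of the cyclic arrangement is that this uses only that each $V_i$ is chromatic for the \emph{single} coordinate $i^+$.

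I would then introduce, for each $i$, the forcing $\mathbb Q_i$ of colourings that are $E_{i^+}$-chromatic with $\chi[C^{i^+}_\delta]$ finite, and show that for $I=(n+1)\bks\{i_0\}\in[n+1]^n$ the \emph{finite product} $\prod_{i\in I}\mathbb Q_i$ is $(\le\lambda)$-distributive by running the submodel construction of Lemma \ref{l26} simultaneously in all coordinates. The decisive bookkeeping is the choice of the guessed ordinal: with $j:=i_0^+$ I would guess $\alpha$ with $h(\alpha)=j$, so that $C_\alpha\s S_j$ meets no $G_{i^+}$ for $i\in I$ (as $i^+\neq j$); hence every limit condition has $C^{i^+}_{\alpha_{\mathrm{lim}}}=\emptyset$ and stays legitimate, just as in the excerpt. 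The same computation shows that forcing with $\prod_{i\in I}\mathbb Q_i$ renders $(G_{i^+},E_{i^+})$ countably chromatic for every $i\in I$, i.e. it collapses the chromatic number of every base graph \emph{except} $(G_j,E_j)$.

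Finally, for the first bullet I would prove $\chr(\times_{i\in I}\mathcal G_i)=\lambda^+$ by generalising Lemma \ref{l5}. Forcing with $\prod_{i\in I}\mathbb Q_i$ adds generics $\langle\chi^*_i\mid i\in I\rangle$ whose restrictions populate the $V_i$, and crucially no $\mathbb Q_i$ demands $E_j$-chromaticity (as $i^+=j$ would force $i=i_0\notin I$), so the $\sd_\lambda$-guessing preserving $\chr(G_j,E_j)>\lambda$ survives. A hypothetical $F$-chromatic $c\colon\times_{i\in I}V_i\to\lambda$ must then be pushed down to an $E_j$-chromatic $c^*\colon G_j\to\lambda$, contradicting preservation. \emph{This is where the main obstacle lies.} Since a tensor-product edge demands an edge in \emph{every} coordinate, fixed vertices in the coordinates $k\in I\bks\{j\}$ produce no edge, while independent $\chi^*_k$-restrictions only recover the tensor product $\times_{i\in I}(G_i,E_i)$ — which is countably chromatic in the extension and yields nothing. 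The real work is to \emph{couple} the coordinates: alongside each $E_j$-edge $\{\alpha'',\alpha\}$ produced by the reflection at $\alpha\in G_j$, one must manufacture matching $E_k$-edges in every other present coordinate. I expect this to be accomplished by folding all coordinates into the guessing object itself — guessing a name for $\langle\chi^*_i\rangle_{i\in I}$ together with the behaviour of $c$, and reading the required neighbours in each $G_k$ off the generic data attached to $\alpha$, exactly as $A^i_\alpha$ and $K_\alpha$ are read off from $g_\alpha$ in the excerpt.
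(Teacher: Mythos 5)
Your second half---the countable colouring of the full $(n+1)$-fold product via the least cyclic descent---is correct, and for that half your cyclic design is arguably cleaner than what the paper does. But the content of the theorem lies in the first bullet, and there your proposal has a genuine gap which you yourself flag: you never prove $\chr(\times_{i\in I}\mathcal G_i)=\lambda^+$, you only conjecture that the ``coupling problem'' can be solved by folding all coordinates into the guessing object. With your definitions this is not a technicality but a structural obstruction (note it is invisible at $n=1$, where $|I|=1$ and Lemma \ref{l5} applies directly; it appears exactly when $n\ge 2$). Since your $V_i$ has domains only in $G_i$ and your $F_i$ demands an $E_i$-edge between the domains, an edge of $\times_{i\in I}\mathcal G_i$ requires edges in all $n$ coordinates simultaneously, with the relevant vertices lying in the pairwise disjoint sets $G_i$. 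The diamond reflection, however, only ever produces edges inside a single $G_j$: by Lemma \ref{l21}(3), $C_\alpha\s S_{h(\alpha)}$, so guessing at $\alpha\in G_j$ manufactures neighbours $\alpha_i''\in G_j$ of $\alpha$ and nothing whatsoever in any other $G_i$. Worse, any vertex-wise coupling $\beta\mapsto\beta_i$ from $G_j$ to $G_i$ ($i\in I$, $i\neq j$) sending $E_j$-edges to $E_i$-edges would be a graph homomorphism from $(G_j,E_j)$ to $(G_i,E_i)$; since your product forcing makes $(G_i,E_i)$ countably chromatic for every $i\neq j$ while (by the very preservation argument you invoke) keeping $\chr(G_j,E_j)=\lambda^+$, no such homomorphism can exist. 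So the coupling you hope to read off the guessed data cannot be achieved by any assignment of tuples to vertices of $G_j$, and the push-down of $c$ to an $E_j$-chromatic $c^*$ on $G_j$---which is what the contradiction needs, even locally at the guessed point---is unavailable.

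The paper dissolves the problem by defining the graphs differently: it takes $V_i:=\{\chi:\beta\rightarrow\omega\mid\beta\in\biguplus\{G_j\mid j<n+1,\ j\neq i\},\ \chi\text{ is }E_i\text{-chromatic}\}$ and, crucially, lets $F_i$ consist of the $\s$-comparable pairs whose domains form an edge in $\biguplus_{j<n+1}E_j$, i.e.\ in \emph{any} coordinate's edge relation, not just $E_i$. Then, with $k$ the missing index, every $\beta\in G_k$ yields the diagonal vertex $\left(\chi^*\restriction\beta\right)_{i\in I}$ of the product (legitimate because $\beta\notin G_i$ for $i\in I$ and the $\mathbb Q_I$-generic $\chi^*$ is $E_i$-chromatic for all $i\in I$), and a single $E_k$-edge $\{\alpha,\beta\}$ already lifts to a product edge, since each coordinate only asks for comparability plus an edge in some $E_j$. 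Hence a chromatic $c$ on $\times_{i\in I}(V_i,F_i)$ pushes down to an $E_k$-chromatic $c^*$ on $G_k$, and the contradiction of Lemma \ref{l5} applies verbatim; the price, paid in the other half, is a slightly more elaborate Hajnal colouring (finite sets of triples recording all defined cross-values $\chi_i(\dom(\chi_j))$ with $\dom(\chi_j)\in G_i$, nonempty by essentially your cyclic-descent argument). If you replace your $V_i,F_i$ by these, your distributivity and preservation analysis carries over and the coupling issue never arises.
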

\begin{proof} Let $\langle (D_\alpha,X_\alpha)\mid \alpha<\lambda^+\rangle$, $h:\lambda^+\rightarrow\lambda^+$,
and $\langle C_\alpha\mid\alpha<\lambda^+\rangle$ be as in the proof of the previous section.
For all $i<\omega$, put:
\begin{itemize}
\item $S_i:=\{\alpha<\lambda^+\mid h(\alpha)=i\}$;
\item $G_i:=\{ \alpha\in S_i\mid \otp(C_\alpha)=\lambda\}$;
\item $E_i:=\{ \{\alpha,\delta\}\in [G_i]^2\mid \min(C_\alpha)>\sup(C_\delta\cap\alpha)\}$;
\item $C^i_\delta:=\{ \alpha\in C_\delta\cap G_i\mid \min(C_\alpha)>\sup(C_\delta\cap\alpha)\}$ for all $\delta<\lambda^+$.
\end{itemize}

For $i<\omega$ and $\gamma<\lambda^+$, we say that a coloring $\chi:\gamma\rightarrow\omega$ is $i$-\emph{suitable} if the two hold:
\begin{itemize}
\item $\chi[C^i_\delta]$ is finite for all $\delta\le\gamma$, and
\item $\chi(\alpha)\neq\chi(\delta)$ for all $\alpha<\delta\le\gamma$ with $\{\alpha,\delta\}\in E_i$.
\end{itemize}

As in the previous section, for every $i<\omega$ and $\beta<\gamma<\lambda^+$
with $\beta\not\in G_i$, and an $i$-suitable coloring $\chi:\beta\rightarrow\omega$,
there exists an $i$-suitable coloring $\chi':\gamma\rightarrow\omega$
extending $\chi$.

Put $\mathbb Q_i:=(\{ \chi:\beta\rightarrow\omega\mid \beta\in\lambda^+\setminus G_i, \chi\text{ is }i\text{-suitable}\},\s)$.
Then a straight-forward variation of the proof of Lemma \ref{l26} shows that the product forcing $\times_{i\in I}\mathbb Q_i$
is   $(\le\lambda)$-distributive for every $I\in[\omega]^{<\omega}$.
Moreover, for $I\in[\omega]^{<\omega}$, as $\langle G_i\mid i\in I\rangle$ are pairwise disjoint,
 the product forcing $\times_{i\in I}\mathbb Q_i$ is isomorphic to
$$\mathbb Q_I:=\left(\left\{ \chi:\beta\rightarrow\omega\mid \beta<\lambda^+\ \&\ \bigwedge_{i\in I}(\beta\not\in G_i\ \&\ \chi\text{ is }i\text{-suitable})\right\},\s\right).$$

Finally, fix a positive integer $n<\omega$, and  all for $i<n+1$, put:
\begin{itemize}
\item $V_i:=\{ \chi:\beta\rightarrow\omega\mid \beta\in\biguplus\{ G_j\mid j<n+1, j\neq i\}, \chi\text{ is }E_{i}\text{-chromatic}\}$;
\item $F_i:=\{ \{ \chi,\chi'\}\in[V_i]^2\mid \{\dom(\chi),\dom(\chi')\}\in\biguplus_{j<n+1}E_j, \chi\s \chi'\}$;
\item $\mathcal V_i:=(V_i,F_i)$.
\end{itemize}

\begin{claim} $\chr(\mathcal V_0\times\cdots\mathcal V_n)\le\aleph_0$.
\end{claim}
\begin{proof} Define $c:V_0\times\cdots\times V_n\rightarrow[\omega^3]^{<\omega}$
by stipulating:
$$c(\chi_0,\ldots,\chi_n):=\{ (\chi_i(\dom(\chi_j)),i,j)\mid i,j<n+1, \dom(\chi_j)\in G_i\cap\dom(\chi_i)\}.$$

Note that, by definition of $V_i$, $h(\dom(\chi_i))\neq i$ for all $i\le n$.
Let us also point out that $c(\chi_0,\ldots,\chi_n)$ is nonempty. For this, define a sequence $\langle a_i \mid i< n+1\rangle$
by letting $a_0:=\chi_0$, and $a_{j+1}:=\chi_{h(\dom(a_j))}$ for all $j<n$.

If there exists some $j<n$ such that $\dom(a_j)<\dom(a_{j+1})$,
then clearly $$\left(a_{j+1}(\dom(a_j)),h(\dom(a_{j+1})),h(\dom(a_j))\right)\in c(\chi_0,\ldots,\chi_n),$$
and we are done. Otherwise, we have $\dom(a_0)>\dom(a_1)>\cdots>\dom(a_n)$, so put $a_{n+1}:=\chi_{h(\dom(a_n))}$.
Let $i<n$ be such that $a_{n+1}=a_i$. Then $\dom(a_{n+1})=\dom(a_i)>\dom(a_n)$,
and hence $$\left(a_{n+1}(\dom(a_n)),h(\dom(a_{n+1})),h(\dom(a_n))\right)\in c(\chi_0,\ldots,\chi_n).$$

Finally, suppose towards a contradiction that $\{(\chi_0,\ldots,\chi_n),(\chi_0',\ldots,\chi_n')\}\in F_0*\cdots *F_n$,
while $c(\chi_0,\ldots,\chi_n)=c(\chi_0',\ldots,\chi_n')$. Pick $(m,i,j)\in c(\chi_0,\ldots,\chi_n)$.
By $(\chi_i,\chi'_i)\in F_i$, we know that $\chi^*:=\chi_i\cup\chi'_i$ is $E_i$-chromatic.
So, by $\chi^*(\dom(\chi_j))=m=\chi^*(\dom(\chi_j'))$, we get that $\{\dom(\chi_j),\dom(\chi_j')\}\not\in E_i$,
contradicting the fact that $\{\chi_j,\chi_j'\}\in F_j$ and $h(\dom(\chi_j))=i=h(\dom(\chi_j'))$.
\end{proof}

\begin{claim} $\chr(\times_{i\in I}\mathcal V_i)=\lambda^+$ for every $I\in[n+1]^n$.
\end{claim}
\begin{proof} Fix $I\in[n+1]^n$. Let $k<n+1$ be such that $n+1=(I\uplus\{k\})$.

Towards a contradiction, suppose that $c:\times_{i\in I}V_i\rightarrow\lambda$ is $*_{i\in I}F_i$-chromatic.
Let $\mathbb G$ be $\mathbb  Q_I$-generic over $V$, and work in $V[\mathbb G]$.
Put $\chi^*:=\bigcup\mathbb G$. Then $\chi^*:\lambda^+\rightarrow\omega$ is $E_i$-chromatic for all $i\in I$.
Notice that for all $i\in I$ and $\beta\in G_k$, by $i\neq k$, we have $\chi^*\restriction\beta\in V_i$.
Thus, we may derive a coloring $c^*:G_k\rightarrow\lambda$ by letting for all $\beta\in G_k$: $$c^*(\beta):=c\left(\prod_{i\in I}\chi^*\restriction\beta\right).$$
Since $c$ is $*_{i\in I}F_i$-chromatic, we then get that $c^*$ is $E_k$-chromatic.
That is, $c^*$ witnesses that $\chr(G_k,E_k)\le\lambda$.

For concreteness, let us assume that $k=0$.
Define $H_i,M_i,h_i,A_i,K,g$ as in the proof of Lemma \ref{l5}.
Work back in $V$.
Let $p_0\in\mathbb Q_I$ be such that $$p_0\forces\name{g}:\check\lambda^+\rightarrow {}^{\check\lambda+1}2,\text{ and }c^*\text{ is }E_0\text{-chromatic}.$$
By possibly extending $p_0$, we may moreover assume that $p_0$ forces that $\{ \alpha<\lambda^+\mid g(\alpha)(i)=1\}$
is unbounded in $\lambda^+$ for all $i\le\lambda$,
and knows about the interaction of $g$ with $c^*$.

As any initial segment of $g$ belongs to $V$, we shall consider the  set $$Z:=\{ (p,f)\in\mathbb Q_I\times{}^{<\lambda^+}({}^{\lambda+1}2)\mid p_0\s p\forces_{\mathbb Q_I}\name{g}\restriction\dom(f)=\check f\}.$$

Let $\langle N_\alpha\mid \alpha<\lambda^+\rangle$
be an increasing and continuous sequence of elementary submodels of $\left(\mathcal H(\theta),\in,\le_\theta\right)$,
each of size $\lambda$, such that $\langle D_\delta\mid\delta<\lambda^+\rangle,\mathbb Q_I,\psi,\name{g},p_0\in N_0$,
and $\langle N_\beta\mid \beta\le\alpha\rangle\in N_{\alpha+1}$ for all $\alpha<\lambda^+$.

Pick some $\alpha<\lambda^+$ with $\otp(D_\alpha)=\lambda$ such that $h(\alpha)=0$, $X_\alpha=\psi[Z]\cap\alpha$, and $\acc(D_\alpha)\s E:=\{\delta<\lambda^+\mid N_\delta\cap\lambda^+=\delta\}$.

Let $\{ \alpha_i\mid i\le \lambda\}$ denote the increasing enumeration of $\acc(D_\alpha)\cup\{\alpha\}$.
We have $h(\alpha_i)=0$, and $M_i\cap\lambda^+=\alpha_i\in S_0$ for all $i\le\lambda$.
Write $M_i:=N_{\alpha_i}$.

Recursively and $\unlhd_\theta$-canonically define a continuous sequence of pairs $\langle (p_i,f_i)\mid i<\lambda\rangle$ that will satisfy the following for all $i<\lambda$:
\begin{itemize}
\item $p_{i+1}\forces \name{g}\restriction\check \alpha_i=\check f_{i+1}$;
\item $\alpha_i<\dom(p_i)<\alpha_{i+1}$;
\item $\langle p_j\mid j\le i\rangle$ is an increasing sequence of conditions that belongs to $ M_{{i+1}}$.
\end{itemize}

This process is feasible thanks to the fact that $C^j_{\alpha_i}$ is empty  for every limit $i<\lambda$
and  every $j\in I$.\footnote{Recall that $h(\alpha_i)\neq j$ for all $j\in I$ and $i\le \lambda$.}
Then $\langle (p_i,f_i)\mid 0<i<\lambda\rangle=\langle (p_i^\alpha,f_i^\alpha)\mid 0<i<\lambda\rangle$,
and $p:=\bigcup_{i<\lambda}p_i$ is a legitimate condition.
Let $p^*$ be an extension of $p$ that decides $c^*(\alpha)$, say $p^*\forces c^*(\alpha)=\check i$,
and decides $h_i\restriction\alpha$.
Then $p^*\forces \{\alpha_i'',\alpha\}\in E_0 \ \&\ \alpha_i''\in\rng(h_i)\s H_i$.
So $p^*$ forces that $c^*$ is not an $E_0$-chromatic coloring,
contradicting the fact that $p^*$ extends $p_0$.
\end{proof}
\end{proof}

\section*{Concluding Remarks}
$\bullet$ While the above graphs are derived directly from $\sd_\lambda$,
their analysis rely heavily on forcing arguments (e.g., the ability to change the chromatic number of the involved graphs via ``nice'' notions of forcing).
We believe it is worthwhile to see whether these objects may be analyzed without invoking the forcing machinery.

$\bullet$ We also think that it is worth carrying a systematic study of the extent to which graphs may change their chromatic number
by means of ``nice'' notions of forcing. To mention a few results from the forthcoming \cite{rinot15}:
\begin{enumerate}
\item There exists a graph of chromatic number $\aleph_1$,
that may change its chromatic number to $\aleph_0$ by means of a $ccc$ forcing.
If \textsf{CH} holds, then moreover exists a graph of chromatic number $\aleph_1$,
that may change its chromatic number to $\aleph_0$ via an \emph{absolutely $ccc$} notion of forcing.
 \item If $\gch$ holds, then for every regular cardinal $\kappa$,
there exists a graph of size and chromatic number $\kappa^+$,
that may change its chromatic number to $\kappa$ via a $(<\kappa)$-directed closed,
and $\kappa^+$-$cc$ notion of forcing;
 \item If $\gch+\diamondsuit(\omega_1)$ holds, then for every regular cardinal $\kappa$,
there exists a graph of size $\kappa^+$, such that any of its subgraphs of size $\kappa^+$
have chromatic number $\kappa^+$, yet,
the chromatic number of the graph may be made $\kappa$ via a $(<\kappa)$-directed closed,
and $\kappa^+$-$cc$ notion of forcing;
\item If $V=L$, then for every uncoutable cardinal $\mu$ below the first fixed point of the $\aleph$-function,
there exists a graph $\mathcal G$ of size and chromatic number $\mu$,
and a sequence of cofinality-preserving notions of forcing $\langle \mathbb P_\alpha\mid\alpha<\mu\rangle$,
such that for every cardinal $\kappa\le\mu$, $\forces_{\mathbb P_\kappa}\chr(\mathcal G)=\kappa$.
\end{enumerate}

\section*{Acknowledgement}
Part of this work was done while I was a postdoctoral fellow
at the Fields Institute and University of Toronto Mississauga, under the supervision of Ilijas Farah,
Stevo Todorcevic, and William Weiss. I would like to express my deep gratitude to my supervisors and the hosting institutes.

I am grateful to Juris Stepr\={a}ns for illuminating discussions on \cite{sh:64},\cite{sh:98}.

\bibliographystyle{plain}

\end{document}